\newcommand\NoBlackBoxes{\global\overfullrule0pt}
\theoremstyle{plain}\newtheorem{thm}{Theorem}[section]
\theoremstyle{plain}\newtheorem{cor}[thm]{Corollary}
\theoremstyle{definition}\newtheorem{rem}[thm]{Remark}
\theoremstyle{definition}
\theoremstyle{plain}
\theoremstyle{plain}\newtheorem{lem}[thm]{Lemma}
\theoremstyle{plain}\newtheorem{prop}[thm]{Proposition}
\theoremstyle{definition}
\theoremstyle{plain}
\numberwithin{equation}{section}
\newcommand{\E}{{\mathbb{E}}}
\newcommand{\R}{{\mathbb{R}}}
\renewcommand{\P}{{\mathbb{P}}}
\newcommand{\WK}[1]{\mathbb{P}\left(#1\right)}
\newcommand{\EW}[1]{\mathbb{E}\left[#1\right]}
\newcommand{\V}[1]{\mathbb{V}\left(#1\right)}
\newcommand{\1}{\mathbbm{1}}
\DeclareSymbolFont{Symbols}{OMS}{cmsy}{m}{n}
\DeclareMathSymbol{\Setminus}{\mathbin}{Symbols}{"6E}
\let\eps\varepsilon
\newcommand{\nconv}{\xrightarrow[]{N\to\infty}}
\newcommand{\Pconv}{\xrightarrow[N\to\infty]{\P}}
\newcommand{\dconv}{\xRightarrow{N\to\infty}}
\begin{document}

\title[Spectral properties of the SBM and hitting times of random walks]{Spectral properties of the stochastic block model and their application to hitting times of random walks}

\author[Matthias L\"owe]{Matthias L\"owe }

\email[Matthias L\"owe]{maloewe@uni-muenster.de}

\author[Sara Terveer]{Sara Terveer}

\email[Sara Terveer]{terveer@math.lmu.de}

\begin{abstract}
We analyze hitting times of simple random walk on realizations of the stochastic block model. We show that under some
natural assumptions the hitting time averaged over the target vertex asymptotically almost surely 
given by $N(1+o(1))$. On the other hand, the hitting time averaged over 
the starting vertex asymptotically almost surely depends on expected degrees in the block the target vertex is in.

We also show a central limit theorem for the hitting time averaged over the starting vertex. Our main techniques are 
a spectral decomposition of these hitting times, a spectral analysis of the adjacency matrix and the graph Laplacian.
\end{abstract}

\subjclass[2010]{05C81;05C80;60F05}
\keywords{random walk, stochastic block model, starting hitting time, target hitting time, law of large numbers, central limit theorem, delta method}

\maketitle


\section{Introduction}\label{sec:intro}
Random walks on random graphs are very topical in contemporary probability theory, see e.g.\ \cite{AGHHN22, Sylvester21, Frieze_et_al18,
BLPS18} and the references therein. Such random walks can be considered a special instance of random walk in random environment and the principal question is, if and how
this random environment, hence the random graph, influences characteristic properties of such a random walk. For example, \cite{FR08} and \cite{AGHHN22} analyze the mixing 
time of simple random walks on random graphs, while the focus in \cite{sood}, \cite{LoeweTorres}, \cite{LT2020}, and \cite{LT2022} is on the (average) hitting times. 
Note however, that most results on random walks on random graphs consider (sparse or dense) Erd\H{o}s-R\'enyi graphs (the note 
\cite{HL19} is a slight exception, there 
the authors consider random (Erd\H{o}s-R\'enyi type) hypergraph models). As in \cite{LoeweTorres}, \cite{LT2020}, and \cite{LT2022} in this note we will consider
average hitting times of random walks on random graphs. Our main technique will again be a spectral representation of the hitting 
times. Note that in \cite{ottolini2023concentration} the authors take a 
different approach to hitting times for very dense Erd\H{o}s-R\'enyi random graphs (i.e.\ for them $p$ is constant) exploiting the fact 
that for such graphs the diameter is $2$ with high probability. 

In the present situation, the random environment, however, will be given by a stochastic block model.  
To be more concrete, we consider a random graph $G_N(M,P)=(V_N,E_N)$. $G_N(M,P)$ will be constructed according to the stochastic block model, i.e. the vertex set 
$V_N=\{1,\dots,N\}$ is fixed and consists of $M<N$ disjoint ``blocks'' $V^{(1)}_N,\dots,V^{(M)}_N$ with $N_1,\dots,N_M$ vertices, where $\sum\limits_{m=1}^MN_m=N$. Here, we will consider the situation where $N$ is a multiple of $M$ and choose $N_1=\dots=N_M=\frac{N}{M}$.

The unoriented edges of the graph are realized by stochastically independent random variables $\eps_{v,w}\in\{0,1\}$ 
(for the edge $\{v,w\} ,v, w\in V_N$). The probabilities 
for $\eps_{v,w}=1$ are given by a (symmetric) $M\times M$ matrix denoted by $P\coloneqq P_M$. Hence the probabilities for $\eps_{v,w}=1$ just depend on the blocks the vertices are 
in, and the entries $p_{i,j}(N)$ denote the probability for an edge to 
exist between two vertices $v\in V^{(i)}_N$ and $w\in V^{(j)}_N$.
Notice that we allow loops (denoted by multisets $\{v,v\}$, $v\in V$) in the graph for technical purposes of the graph's adjacency matrix, however, their removal should not influence the results. We will denote the set of loops by $L=L_N$.

In the present note, we assume all edge probabilities between different blocks to be identical $q:=p_0(N)$, i.e.\ all off-diagonal entries of $P_M$ are $q$. 
The diagonal entries $p_{i,i}(N)$ are abbreviated by $p_i(N)$. 
We assume that the $p_i(N)$ are sorted in descending order, i.e. $p_1(N)\geq p_2(N)\geq\dots \geq p_M(N)$. 
This can easily be obtained by rearranging the blocks of the model in the corresponding order.
We denote the block number of a vertex $v\in V$ by $B(v)$, i.e. for $m=1,\dots,M$, if $v\in V^{m}$, then $B(v)=m$.

We will additionally impose the relatively mild connectivity conditions. To formulate them we refer to the definition \eqref{eq:gamma_m}of the expected degree of a vertex in block $m$, $\gamma_m$,  and set
$$\gamma_{\min}\coloneqq \min\limits_{m=1,\dots,M}\gamma_m.$$
as well as
$$\gamma_{\max}\coloneqq \max\limits_{m=1,\dots,M}\gamma_m.$$
Then our first condition reads
	\begin{equation}
\frac{M \log^4(N)}{N p_m(N) +N(M-1)q(N)}\cdot\frac{\gamma_{\max}^2}{\gamma_{\min}^2}\nconv 0
\label{eq:connectivity}
\end{equation}

(note that subsequently, we will frequently omit the dependency of $N$ unless necessary for clarity).


This implies (from the second factor), that the $p_m$ are not of too different order which is a technicality needed throughout the proof.
The first factor guarantees either intra-block connectivity (i.e. all $p_m$ are sufficiently large) or inter-block connectivity (i.e. $q$ is sufficiently large). In the former case, connectivity of the entire graph is not guaranteed, so we need an additional assumption on $q$. Moreover, we require that $q$ must not be  too small compared to the $p_m$, resulting in the condition
\begin{align}
	((M-1)q)^2\gg \left(\frac{M\log N}{N}\right)^{1/4}p_{\min}^{5/4}p_{\max}^{1/2},
	\label{eq:qconditionneu}
\end{align}
where $p_{\max}=\max\limits_{m=1,\dots,M}p_m$ and $p_{\min}=\min\limits_{m=1,\dots,M}p_m$. 
The second factor in \eqref{eq:connectivity} and the dependence of the lower bound in \eqref{eq:qconditionneu} on the $p_m$
are artifacts of our subsequent proofs. 

As a matter of fact, together the two conditions imply that $G_N(M,P)$ is connected with probability converging to 1, as $N \to \infty$ (which we will denote by ''asymptotically 
almost-surely (a.a.s.)). Each block $i$ (without the edges leaving the
block) constitutes an Erd\H{o}s-R\'enyi graph on $n=N/M$ vertices and connection probability $p_i$. Then, $\frac{M \log^4(N)}{N\cdot p_m(N)\cdot (1-p_m(N))}\to \infty$ which is implied by \eqref{eq:connectivity} 
is (more than)
sufficient to ensure that block number $m$ is connected a.a.s. 

Moreover, 
note that there are $N/M$ vertices per block. \eqref{eq:connectivity} gives that all $p_m$ satisfy
$p_m \gg \frac{M \log^4 N}{N}$. Hence \eqref{eq:qconditionneu} implies that $q \gg \frac{M \log^{5/2} N}{(M-1) N}$.
This, together with the block sizes guarantees that all blocks are connected with each other, a.a.s., hence 
the entire graph is connected a.a.s. In the following results we will always tacitly assume that our realization of the stochastic block model is connected.

For fixed $N$ and a fixed realization $G=G_N$ of the stochastic block model, consider the simple random walk in discrete 
time $(X_t)$ on $G$: If $X_t$ is in $v\in V$ at time $t$, $X_{t+1}$ will be in $w$ with probability $\frac{1}{d_v}$, 
where $d_v$ denotes the degree of $v$, if $\{v,w\}\in E$, and with probability 0, otherwise. 

The invariant distribution of this walk is given by
\[\pi_v\coloneqq \frac{d_v}{\sum\limits_{w\in V}d_w}=\frac{d_v}{2|E|-|L|}.\]
Let $H_{vw}$ be the expected time it takes the walk to reach vertex $w\in V$ when starting from vertex $v\in V$. 
Of course, $H_{vw}$ will typically be sensitive to the choice of $v$ and $w$. To compensate for this, one averages over either $v$ or $w$. 
More precisely, we define
\begin{equation}\label{eq:defhitting}
	H_w:=\sum\limits_{v\in V}\pi_vH_{vw}\quad\text{ and }\quad H^v:=\sum\limits_{w\in V}\pi_w H_{vw}.
\end{equation}

Notice that in \cite{LT2020} and \cite{LT2022}, these are called \emph{average target hitting time} and \emph{average starting hitting time}, meanwhile, in \cite{LPW09}, the latter is called \emph{random target time}.
 Note that $H_w$ and $H^v$ are expectation values in the random walk measure, but with respect to the realization of the random graph, they are random variables.
In \cite{LoeweTorres}, the asymptotic behaviour of $H_w$ and $H^v$ were analyzed on the level of a law of large numbers for a random walk on a realization of an Erd\H{o}s-R\'enyi random graph. It was shown that
\[H_w=N(1+o(1))\quad\text{ as well as }\quad H^v=N(1+o(1)) \]
asymptotically almost surely proving a conjecture from \cite{sood}. The results were extended to central limit theorems in \cite{LT2020} and \cite{LT2022}. In \cite{HL19} the law of large numbers was extended to random hypergraphs. 
The aim of this paper is to prove the correspondings laws of large numbers  and a central limit theorem for $H_w$ for random graphs according to the stochastic block model.

To state these, define
\begin{equation}\label{eq:gamma_m}
\gamma_m:=\frac{N}{M}p_m+(M-1)\frac{N}{M}q \quad \mbox{for $m=1,\dots,M$.}\end{equation}
Note that 
$\gamma_m$ is the expected degree of a vertex in block $m$. 
Moreover, let $$\gamma_{\min}\coloneqq \min\limits_{m=1,\dots,M}\gamma_m.$$
Our goal is to prove the following statement:
\begin{thm}\label{maintheo}
	Assume that conditions \eqref{eq:connectivity} and \eqref{eq:qconditionneu} hold. Then
	\[H^v = N(1+o(1))\]
	asymptotically almost surely.
\end{thm}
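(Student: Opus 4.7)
The plan is to reduce the mean starting hitting time to an eigenvalue sum via Kemeny's identity, and then pin down this sum by a spectral perturbation argument around the expected adjacency matrix. First I would use the classical spectral representation: writing $\mathcal S := D^{-1/2}AD^{-1/2}$ (equivalently, $P = D^{-1}A$) and letting $1 = \lambda_1 > \lambda_2 \geq \cdots \geq \lambda_N$ be its eigenvalues, one has
\[
H^v \;=\; \sum_{k=2}^N \frac{1}{1-\lambda_k}.
\]
The right-hand side -- Kemeny's constant -- is in fact \emph{independent of} $v$, so the theorem reduces to showing that this sum equals $N(1+o(1))$ a.a.s.

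The bulk of the work is then a spectral analysis. Setting $p := p_1 = \cdots = p_M$, $n := N/M$, and $d := n(p+(M-1)p_0)$ (the typical degree), the expected adjacency matrix $\bar A := \E[A]$ has rank $M$ with nonzero eigenvalues $n(p+(M-1)p_0)$ (along the all-ones direction) and $n(p-p_0)$ of multiplicity $M-1$ (along the block-constant mean-zero directions). Dividing by $d$ produces the reference spectrum $\bigl\{1;\; 1-\tfrac{Mp_0}{p}(1+o(1))\text{ with multiplicity }M-1;\; 0\text{ with multiplicity }N-M\bigr\}$. To transfer this picture to $\mathcal S$, I would combine a standard sparse-random-matrix concentration bound $\|A-\bar A\|_{\mathrm{op}} = O(\sqrt{Np})$ a.a.s.\ with uniform degree concentration $\max_v |d_v - d|/d = o(1)$ a.a.s.\ obtained from a Chernoff bound under \eqref{eq:connectivity}. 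Weyl's inequality applied to $\mathcal S - d^{-1}\bar A$ then pins down the spectrum of $\mathcal S$:
\[
\lambda_k \;=\; 1 - \tfrac{Mp_0}{p}(1+o(1)) \quad (2 \leq k \leq M), \qquad \max_{k \geq M+1}|\lambda_k| \;=\; o(1).
\]
Condition \eqref{eq:qcond} enters here to guarantee that the adjacency-level perturbation (of size $O(M/\sqrt{Np})$ after normalization) is strictly smaller than the structural-to-bulk gap $Mp_0/p$, so that the two groups of eigenvalues stay well separated.

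Finally I would sum the two contributions. The $M-1$ structural eigenvalues give
\[
\sum_{k=2}^{M}\frac{1}{1-\lambda_k} \;=\; \frac{(M-1)\,p}{M p_0}(1+o(1)) \;=\; O(p/p_0),
\]
which is $o(N)$ by \eqref{eq:qcond} (this condition forces $p/p_0 \ll \sqrt{NMp/\log N} = o(N)$). For the bulk contribution I would expand $(1-\lambda_k)^{-1} = 1 + \lambda_k + \lambda_k^2/(1-\lambda_k)$ and use (i) $|\mathrm{tr}(P)| = O(M)$ (bounding self-loop contributions) together with $\sum_{k\leq M}\lambda_k = M + o(M)$ to get $|\sum_{k > M}\lambda_k| = O(M) = o(N)$, and (ii) $\sum_{k}\lambda_k^2 = \mathrm{tr}(P^2) = O(N/d) = o(N)$ to control the quadratic remainder. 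This yields $\sum_{k>M}(1-\lambda_k)^{-1} = (N-M) + o(N) = N(1+o(1))$, and the two contributions together give $H^v = N(1+o(1))$ a.a.s. The main technical obstacle is the perturbation step: the structural eigenvalues lie in a band of width only $O(Mp_0/p)$ below $1$, and one must ensure the random perturbation does not close this thin gap -- this is exactly what condition \eqref{eq:qcond} is calibrated to prevent, and a Davis-Kahan argument on the $M$-dimensional block-constant eigenspace offers the cleanest way to isolate the structural eigenvalues in the Kemeny sum.
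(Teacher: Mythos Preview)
Your proposal is correct and follows essentially the same route as the paper: Kemeny's identity reduces $H^v$ to $\sum_{k\ge 2}(1-\lambda_k)^{-1}$, a Weyl-type perturbation of $D^{-1/2}AD^{-1/2}$ around $\gamma^{-1}\E A$ locates the $M-1$ structural eigenvalues near $1-\tfrac{Mp_0}{p}$ and pushes the remaining $N-M$ eigenvalues to $o(1)$, and then the two contributions are summed separately. Two remarks: (i) once you have $\max_{k>M}|\lambda_k|=o(1)$ uniformly, the bulk sum is immediately $(N-M)(1+o(1))$, so your trace expansion $(1-\lambda_k)^{-1}=1+\lambda_k+\lambda_k^2/(1-\lambda_k)$ is unnecessary overhead (the paper does it in one line); (ii) the paper obtains the sharper bound $\|A-\E A\|=O(\sqrt{Np/M})$ rather than your $O(\sqrt{Np})$, which matters if $M$ grows faster than $\log N$ --- with your cruder bound the perturbation-versus-gap comparison would require $p_0\gg\sqrt{p/N}$ rather than just \eqref{eq:qcond}, so you should either invoke the sharper SBM-adapted variance bound or restrict to $M=O(\log N)$. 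The Davis--Kahan remark is superfluous here since only eigenvalue (not eigenvector) control is needed for $H^v$.
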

\begin{thm}\label{maintheo2}
	Assume that conditions \eqref{eq:connectivity} and \eqref{eq:qconditionneu} hold. 
	Then, 
	\begin{align*}
		H_w
		=\frac{N}{M}\frac{\sum\limits_{m=1}^M\gamma_m}{\gamma_{B(w)}}(1+o(1))
	\end{align*}
	asymptotically almost surely. Recall that, for $w\in V$, the quantity ${B(w)}$ denotes the block, in which $w$ can be found.
\end{thm}

Under some additional assumptions, we can extend our result for $H_w$ to a central limit theorem. To that end, we introduce notation for average probability for an intra-block edge and average degree of a vertex:
\begin{align*}
	\bar p &:=\frac1M\sum\limits_{m=1}^Mp_m\quad\text{and}\quad
	\bar\gamma :=\frac1M\sum\limits_{m=1}^M\gamma_m
\end{align*}
Finally, additionally to $\gamma_m$, we denote the variance of the degree of a vertex in block $m$ and the average of these variances by
\begin{align*}
	\upsilon_m^2&\coloneqq\frac{N}{M}p_m(1-p_m)+(M-1)\frac NMq(1-q)\quad\text{and}\quad
	\bar\upsilon^2:=\frac{1}{M}\sum\limits_{m=1}^M\upsilon_m^2.
\end{align*}	
	In order to obtain a central limit theorem, we must assume that our graph is not too densely connected (otherwise we obtain a near-complete graph which leads to a near-deterministic hitting time with little fluctuations): To that end, we replace the condition
	\eqref{eq:connectivity}
	with
	\begin{equation}
		\frac{M \log^4(N)}{N p_m(N) (1-p_m(N))+N(M-1)q(N)(1-q(N))}\cdot\frac{p_{\max}^2}{p_{\min}^2}\nconv 0
		\label{eq:connectivityclt}
	\end{equation}

	Furthermore, we replace condition \eqref{eq:qconditionneu} by the following conditions:
	\begin{equation}
		\frac{\gamma_m}{\bar \gamma}\ll\frac{\upsilon_m}{\bar p},
		\label{eq:cltadditionalassump}
	\end{equation}
	as well as 
	\begin{equation}
		\frac{\bar\upsilon}{\upsilon_m}\cdot\frac{\gamma_m}{\bar \gamma}\ll\sqrt N,
		\label{eq:cltadditionalassump2}
	\end{equation}
	and		
	\begin{equation}
		\frac{\gamma_m}{\upsilon_m}\cdot\frac{p_{\min}^2}{\gamma_{\min}(M-1)^2q^2}\nconv 0
		\label{eq:spectralcondition_diffp}
	\end{equation}
	The conditions \eqref{eq:spectralcondition_diffp} is a stronger, more general version of \eqref{eq:qconditionneu} suited to a central limit theorem (i.e. also ensuring that in the case of very dense blocks, $q$ is not too large, which would lead to a near complete graph again). The other two (additional)
	conditions in the statement ensure that the $p_m$ are ``well balanced'' in the sense that the expectation of the degree of $w$ and the average degree in the graph are not too far apart. In the case of $(M-1)q\ll p_{\min}$, this is (up to a logarithmic factor) implied by \eqref{eq:connectivityclt}.
	
	Similarly, we want to ensure that the variances of the arising degrees are not too different to control the fluctuations, which is why we require that \eqref{eq:cltadditionalassump2} holds. Again, this conditions is up to logarithmic terms already implied by  the previous conditions in the strongly assortative setting. 
	
	Notice that both conditions are satisfied e.g. when all $p_m$ are constant, all of the same order, or all dominated by $(M-1)q$. 

Then, we can show the following theorem
\begin{thm}
	\label{thm:clttargethittingtimediffp}
	Assume that conditions \eqref{eq:connectivityclt} -- \eqref{eq:spectralcondition_diffp} hold. Let $w\in V$ be a vertex with block number $m=B(w)$ such that additionally
	hold.
	Then
	\begin{align*}
		\frac{\gamma_m^2}{N\upsilon_m\bar\gamma}\cdot\left(H_w-\frac{N\bar\gamma}{\gamma_m}\right)&\dconv \mathcal{N}(0,1),
	\end{align*} 	
	where $\dconv $ denotes convergence in distribution.
\end{thm}

\begin{rem}
	The results remains true if the number of blocks $M\coloneqq M(N)\ll N$ depends on the number of vertices $N$, e.g. $M(N)=\log N$, as long as all other conditions remain intact.
\end{rem}

%

The rest of this note is organized in the following way: In spectral graph theory, a common approach to average  hitting times (defined as above) is via 
a representation in terms of the eigenvalues and eigenvectors of a version of the Laplacian matrix of the graph 
(see e.g\ \cite{Lov96}). 
This matrix is 
closely related to the adjaceny matrix of the graph. In a first step we therefore bound the eigenvalues of the expected adjacency 
matrix in the stochastic block model. This will be done in Section 2 using estimates for the matrix of the block transition
probabilities. In Section 3 the considerations of Section 2 will result
in asymptotic results for the eigenvalues of the adjacency matrix in the stochastic block model. As one can already learn from 
Erd\H{o}s-R\'enyi graphs spectral gap estimates for the adjacency matrix and the Laplacian matrix 
play an important role for the order of magnitude of $H^i$.
These will be given in Section 4. Finally, Section 5 contains the proof of Theorem \ref{maintheo}, while in Section 6 we will prove
Theorem \ref{maintheo2}. 
In Section \ref{sec:identicalpi} we will then turn to the case of diagonal entries $p_1=\dots=p_M=p$ of the diagonal of $P_M$. In that case, we are able to improve the conditions for our results substantically through relatively simple modifications of the proofs and additionally prove a central limit theorem.

We separate the problem into several distinct cases: Let
\begin{align}\label{eq:kappa}
	\kappa\coloneqq\lim\limits_{N\to\infty} \frac{(M-1)q}{p_{\min}}\in[0,\infty]
\end{align}
Throughout, we will always require that $\kappa$ exists.
The expression $\kappa$ can be thought of as a quantification of assortativity in the model: For $\kappa=0$, the model is very clearly (strongly) assortative, i.e.\ the intra-block connection probabilities are larger than the connection probabilities between
blocks.
On the other hand, $\kappa=\infty$ does not guarantee a disassortative model
(i.e.\ all the intra-block connection probabilities are smaller than the connection probabilities between
blocks), yet guarantees that the edges within blocks are irrelevant on a macroscopic scale: The connectivity between different blocks dominates.

\section{Eigenvalues of the expected adjusted adjacency matrix}

In this section we will discuss the spectrum of the expected adjacency matrix of a realization of the stochastic block model. 

We begin by rescaling the transition probabilities suitably. Denote by $\Gamma$ the diagonal matrix containing the expected degrees of vertices in each block, i.e. $\Gamma=\mathrm{diag}(\gamma_m)$. 
Then we rescale the matrix of transition probabilities as follows:
\[P_M'=\Gamma^{-1/2}P_M\Gamma^{-1/2},\qquad\text{i.e. }p_{i,j}'=\frac{p_{i,j}}{\sqrt{\gamma_m\gamma_l}}.\]

Let us denote the (random) adjacency matrix of our random graph $G_N$ by $A_N$, i.e.\ $A_N=(a_{v,w})_{v,w \in V_N}$ and 
$a_{v,w}=1$, if $\{v,w\} \in E_N$, otherwise $a_{v,w}=0$.

The corresponding rescaled version of $A$ is given by
\begin{equation}A'_N=(\E D)^{-1/2}A_N(\E D)^{-1/2},\qquad\text{i.e. }a_{v,w}'=\frac{a_{v,w}}{\sqrt{\E d_v \E d_w}}, \label{eq:AN'} \end{equation}
where $\E D$ denotes the $N$-dimensional diagonal matrix consisting of the expected degrees of the $N$ vertices.

Not too surprisingly, it will turn out that the spectrum of $\E A_N'$ is closely related to the spectrum of $P_M'$. We will 
therefore start with a quick analysis of the eigenvalues of $P_M'$.


Through graph conductance we can obtain an upper bound on the spectral gap of $P_M'$:
\begin{lem}\label{lem:disassortativeP}
\[\frac NM \lambda_2(P_M')\leq 1-{2}\cdot{\left(1+\frac{p_M}{(M-1)q}\right)^{-2}},\] i.e. the rescaled second eigenvalue of $P_M'$ is bounded away from 1.
\end{lem}
\begin{proof}
	We rewrite 
	\[P_M'=\Gamma^{-1/2}P_M\Gamma^{-1/2}=\frac MN\Theta^{-1/2}P_M\Theta^{-1/2}\]
	with $$\Theta=\mathrm{diag}(p_m+(M-1)q)=\frac MN\mathrm{diag}(\gamma_m).$$
	Then estimating the eigenvalues of $\Theta^{-1/2}P_M\Theta^{-1/2}$ appropriately is sufficient to obtain the claim.
	
	We consider $P_M$ as the weight matrix of the weighted, undirected graph $G_P$ on the vertex set $\{1,\dots,M\}$, i.e.\ the graph is complete where each edge between different vertices is assigned weight $q$ and each vertex $m\in\{1,\dots,M\}$ has a loop with weight $p_m$, i.e. the weight function $w$ is defined by $w(m,l)=p_m\mathbbm{1}_{m=l}+q\mathbbm{1}_{m\neq l}.$ 
	The degrees of the vertices in this graph are given by the sum of weights of adjacent edges, i.e. 
$$\mathrm{deg}(m)=p_m+(M-1)q=\Theta_{m,m}.$$ Hence,
 $\Theta^{-1/2}P_M\Theta^{-1/2}$is the symmetrically normalized adjacency matrix of $G_P$. Let us further denote by $\mathcal{L}_P\coloneqq I-\Theta^{-1/2}P_M\Theta^{-1/2}$ the symmetrically normalized Laplacian of $G_P$. We are going to compute the spectral gap of $\mathcal L_P$ using Cheeger's inequality on the weighted conductance of $G_P$
(cf. \cite[Section 3]{DS91}, e.g.). 
	To that end, for a weighted graph $G=(V,E,w)$ let us define for a subset $S\subseteq V$ with edge boundary $\partial S$ (i.e. the set of edges $\{m,l\}\in E$ such that $m\in S$ and $l\notin S$)
	\begin{align*}
		\mathrm{vol}_w(S)&\coloneqq\sum\limits_{m\in S}\deg_w(m)\\
		\phi_w(S)&\coloneqq \frac{1}{\mathrm{vol}_w(S)}\sum\limits_{e\in \partial S}w(e)\\
		\phi_w(G)&\coloneqq \min\limits_{S:\mathrm{vol}_w(S)\leq \frac12 \mathrm{vol}_w(V)}\phi_w(S)
	\end{align*}
This latter quantity this called the conductance of $G$.	
In our situation, since we assumed that $p_1\geq \dots \geq p_M$, we obtain that 
	\begin{align*}
		\phi_w(G_P)&\leq \phi_w(\{M\})=\frac{1}{\mathrm{\deg}(M)}\sum\limits_{m\neq M}w(m,M)=\frac{1}{p_M+(M-1)q}\cdot (M-1)q=\frac{1}{1+\frac{p_M}{(M-1)q}}
	\end{align*}
	Cheeger's inequality states that the second smallest eigenvalue of $\mathcal L_P$ is bounded from below by twice the square of the conductance of the graph, i.e. 
	\begin{align}
		\lambda_{M-1}(\mathcal L_P)&\geq 2\phi_w^2(G_P)=2\cdot{\left(1+\frac{p_M}{(M-1)q}\right)^{-2}}
	\end{align}
	Consequentially,
	\begin{equation*}
		\frac NM \lambda_2(P_M')=1-\lambda_{M-1}(\mathcal L_P)\leq 1-{2}\cdot {\left(1+\frac{p_M}{(M-1)q}\right)^{-2}}. \qedhere
	\end{equation*}
\end{proof}

Next consider the expected adjacency matrix $\E A_N'$ (recall \eqref{eq:AN'} for its definition) where we take expectation entry-wise.   
Then, $\E A_N'$ consists of entries from $P_M'$ depending on the blocks the vertices are in. 

More precisely, 
$\E a_{v,w}'=\frac{p_m}{\gamma_m}$, if both $v$ and $w$
are in block $m, m=1, \ldots M$, otherwise $\E a_{v,w}'=\frac{q}{\sqrt{\gamma_{B(v)}\gamma_{B(w)}}}$. 
Not surprisingly, the spectrum of $\E A_N'$ can be simply derived from the eigenvalues of $P_M$:
\begin{lem}\label{lem:evexpmatrix}
The matrix $\E A_N'$ has exactly $M$ non-zero eigenvalues. These are given by 
\begin{equation}\label{eq:EVEA}
\frac{N}{M}\lambda_m(P_M'), \qquad m=1,\dots,M.
\end{equation}
\end{lem}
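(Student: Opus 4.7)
The plan is to exploit the block structure of $\mathbb{E}A$. When the vertices are ordered according to the blocks $V_N^{(1)},\dots,V_N^{(M)}$, the matrix $\mathbb{E}A$ decomposes into $M^2$ blocks, each of size $(N/M)\times(N/M)$: the $(i,j)$-block consists of the single constant value $p_{i,j}$, where $p_{i,j}=p_0$ for $i\neq j$ and $p_{i,i}=p_i$. Equivalently, if $J_{N/M}$ denotes the $(N/M)\times(N/M)$ all-ones matrix, then
\[
\mathbb{E}A \;=\; P_M \otimes J_{N/M},
\]
where $\otimes$ is the Kronecker product.

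Then I would invoke the standard fact that the spectrum of a Kronecker product $B\otimes C$ is the multiset of products $\{\lambda_i(B)\lambda_j(C)\}$. The matrix $J_{N/M}$ has the single nonzero eigenvalue $N/M$ (with eigenvector the all-ones vector) together with the eigenvalue $0$ of multiplicity $N/M-1$. Hence the nonzero eigenvalues of $\mathbb{E}A$ are precisely
\[
\frac{N}{M}\,\lambda_m(P_M),\qquad m=1,\dots,M,
\]
while the remaining $N-M$ eigenvalues are $0$. To be sure that the right-hand side is indeed nonzero, I would combine Lemma \ref{lem:transitionprobeigenvalues} with assumption \eqref{eq:blockconnectivity}: we have $|\lambda_m(P_M)-p_m|\le(M-1)p_0$ and $(M-1)p_0\ll p_m$, so $\lambda_m(P_M)>0$ for $N$ large.

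If one prefers to avoid Kronecker products, the argument can be given directly: for any eigenvector $v=(v_1,\dots,v_M)^\top$ of $P_M$ with eigenvalue $\lambda_m(P_M)$, extend it to $\tilde v\in\mathbb{R}^N$ by setting $\tilde v_u=v_i$ whenever $u\in V_N^{(i)}$. A short block-by-block computation gives $\mathbb{E}A\,\tilde v=\tfrac{N}{M}\lambda_m(P_M)\,\tilde v$. Conversely, any vector $\tilde u\in\mathbb{R}^N$ whose entries sum to zero on every block lies in the kernel of $\mathbb{E}A$; the space of such vectors has dimension $N-M$, accounting for the remaining eigenvalues. I do not expect any serious obstacle: the statement is essentially a direct linear-algebra computation, and the only thing one has to be careful about is the consistent dimension count $M\cdot 1+(N-M)=N$ together with the positivity of the $\lambda_m(P_M)$ just discussed.
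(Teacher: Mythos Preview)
Your proof is correct and follows essentially the same approach as the paper: the authors also write $\mathbb{E}A = P_M \otimes J$ with $J$ the $(N/M)\times(N/M)$ all-ones matrix, invoke the Kronecker-product eigenvalue formula together with the single nonzero eigenvalue $N/M$ of $J$, and appeal to Lemma~\ref{lem:transitionprobeigenvalues} for the nonvanishing of the $\lambda_m(P_M)$. Your treatment is in fact slightly more detailed, and the alternative direct argument you sketch is a pleasant addition but not needed.
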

\begin{proof}
Denoting by $J$ the $(N/M)\times (N/M)$ matrix in which each entry equals $1$, we see that 
$\E A_N'=P_M'\otimes J$, where $\otimes$ denotes the Kronecker product of two matrices matrices. 

Since $J$ has only one non-zero eigenvalue, which is $\frac{N}{M}$, 
we obtain that the $M$ non-zero eigenvalues of the matrix $\E A_N'$ are given by \eqref{eq:EVEA} as claimed.
\end{proof}

\section{Eigenvalues of the rescaled adjacency matrix}
Let us now turn to the rescaled adjacency matrix $A_N'$ (again recall \eqref{eq:AN'} for its definition). 
More precisely, we consider the centered rescaled adjacency matrix of $G_N$, 
$$X\coloneqq A_N'-\E A_N', $$ first.

Denote by
\begin{equation}\label{eq:defsigma}
\sigma^2\coloneqq\frac{1}{M}(\max\limits_{m=1,\dots,M}p_m(1-p_m)+(M-1)q(1-q)).
\end{equation}
Then we can give an upper bound on the maximum eigenvalue (in absolute value) of the matrix $X$:
\begin{thm}\label{thm:Xmaxeigenvalue}
There is a constant $c$ such that
\[\|X\|_2\leq \frac{1}{\gamma_{\min}}\Big(2\sqrt{N\sigma^2}+c\log(N)\sqrt[4]{N\sigma^2}\Big)\]
holds asymptotically almost surely.
\end{thm}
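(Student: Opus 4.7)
The plan is to apply a non-asymptotic spectral-norm concentration inequality for symmetric matrices with independent, bounded, centered entries to $X = A_N - \E A_N$, following the reformulation in \cite{avrachenkovinria}. The matrix $X$ is symmetric; its entries on and above the diagonal are independent, centered, and bounded in absolute value by $1$; and $\mathrm{Var}(X_{vw})$ equals $p_i(1-p_i)$ if $v,w \in V_N^{(i)}$ and $p_0(1-p_0)$ otherwise.

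First, I would identify the correct variance proxy. For any $v \in V_N^{(i)}$, the row-sum of variances is
\[
\sum_{w \in V_N} \mathrm{Var}(X_{vw}) = \Big(\frac{N}{M}-1\Big)\, p_i(1-p_i) + (M-1)\,\frac{N}{M}\, p_0(1-p_0),
\]
which by the definition \eqref{eq:defsigma} is bounded above by $N\sigma^2$. Hence the maximum row-variance sum $V_* := \max_v \sum_w \mathrm{Var}(X_{vw})$ satisfies $V_* \leq N\sigma^2$, with a matching lower bound up to a factor $1+o(1)$.

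Next I would invoke a F\"uredi-Koml\'os-Vu type trace-method inequality for symmetric random matrices with independent centered entries bounded in absolute value by $1$ and maximum row-variance sum $V_*$. Such an inequality produces
\[
\|X\|_2 \leq 2\sqrt{V_*} + c\,\log(N)\, V_*^{1/4}
\]
with probability at least $1 - N^{-D}$ for any fixed $D > 0$, provided $V_* \to \infty$ polynomially fast. Under assumption \eqref{eq:connectivity} one has $V_* \geq N p_0(1-p_0)(1+o(1)) \gg \log^4 N$, which is well above the polylogarithmic threshold that the inequality requires; a Borel-Cantelli step then upgrades the high-probability estimate to the claimed a.a.s.\ statement. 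Substituting $V_* \leq N\sigma^2$ yields the asserted bound.

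The main obstacle will be verifying that the sharp leading constant $2$ survives in the inhomogeneous SBM setting. Fortunately, the classical trace-method moment computations depend on the entries of $X$ only through their independence, their common boundedness and the row-sum of their variances — the block structure does not enter directly — so the combinatorial estimate used for Erd\H{o}s-R\'enyi graphs goes through essentially verbatim, with the effective variance playing the role of $V_*$, and the sub-leading $\log(N)\,V_*^{1/4}$ correction is produced by the same computation.
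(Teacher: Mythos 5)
Your proposal identifies the right strategy (a F\"uredi--Koml\'os--Vu trace-method bound applied to $X = A_N - \E A_N$) and, importantly, the right intermediary quantity: you correctly observe that the maximum row-variance sum $V_* = \max_v \sum_w \mathrm{Var}(X_{vw}) \approx N\sigma^2$ with $\sigma^2$ as in \eqref{eq:defsigma}. This is exactly the quantity that governs the paper's bound. However, the argument has a genuine gap: you invoke as a black box ``a F\"uredi--Koml\'os--Vu type trace-method inequality'' giving $\|X\|_2 \leq 2\sqrt{V_*} + c\log(N)V_*^{1/4}$ in terms of the \emph{row-variance sum} $V_*$, but the result you cite (Vu 2007, which the paper also builds on) is stated with a \emph{uniform} upper bound $\sigma_{\max}^2 = \max_{v,w}\mathrm{Var}(X_{vw}) = \max_i p_i(1-p_i)$, and would only yield $\|X\|_2 \lesssim 2\sqrt{N\sigma_{\max}^2}$. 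Since in the strongly assortative regime $p_i \gg Mp_0$, we have $N\sigma_{\max}^2 \approx M \cdot N\sigma^2$, so that off-the-shelf bound is weaker by a factor $\sqrt{M}$ than the claim. There is no standard citable form of the FKV inequality with leading constant $2$ expressed in terms of $V_*$ for bounded entries, so the core content of the theorem cannot be outsourced.

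The step you declare to ``go through essentially verbatim'' is precisely what the paper has to do by hand: when enumerating closed walks in $\E[\mathrm{tr}(X^k)]$, each discovery edge can be a same-block discovery (roughly $N/M$ target vertices, variance at most $\sigma_*^2 = \max_m p_m(1-p_m)$) or a different-block discovery (roughly $(M-1)N/M$ target vertices, variance $\sigma_0^2$). Summing a binomial over the number $t$ of same-block discoveries gives the factor $(\frac{N}{M}\sigma_*^2 + (M-1)\frac{N}{M}\sigma_0^2)^{s-1} = (N\sigma^2)^{s-1}$, and this block-sensitive accounting is what yields $N\sigma^2$ rather than $N\sigma_{\max}^2$. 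One then needs Vu's path-counting bound $W'(k,s)$, a ratio estimate $S(N,k,s)/S(N,k,s+1)\leq 1/2$ for $k$ of order $(N\sigma^2/K^2)^{1/4}$, and a high-moment Markov step. Your plan sketches the \emph{conclusion} of this computation but omits the computation itself, so as written it assumes the thing to be proved. To repair the argument you would need either to carry out this block-structured discovery-edge count explicitly (as the paper does, following Avrachenkov--Cottatellucci--Kadavankandy) or to cite a reference that genuinely gives a row-variance-sum version of the FKV bound with leading constant $2$ for bounded non-identically-distributed entries.
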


\begin{proof}

We modify the proof of \cite{avrachenkovinria}, using the original approach by \cite{Vu2007} to our model.

The core idea is to apply Wigner's trace method and estimate the terms in the trace of $X^k$ appropriately. 
To this end, we denote by $X_{v,w}, v,w= 1, \ldots ,N$ the entries of the matrix $X$. Note that each of the $X_{v,w}$ can only take two values:
Either $X_{v,w}=1-p_m$ and this happens with probability $p_m$ or $X_{v,w}=-p_m$ and this event 
has probability $1-p_m$. Here $m$ is the block number, if both $v$ and $w$ are in the $m$-th block, and $m=0$, otherwise. 
We denote by 
\[\sigma_m^2=p_m(1-p_m)\]
the variance of these entries, depending on the edge's block number $m\in\{0,\dots,M\}$ 

Now clearly for an even integer $k$, we have
\begin{equation}
\EW{\mathrm{tr}\left(X^k\right)}=\sum\limits_{v_1,\dots,v_k\in V}\EW{X_{v_1,v_2}\cdots X_{v_{k-1},v_k}\cdot X_{v_k,v_1}}
\label{eq:traceX}
\end{equation}

We interpret the sequence $I\coloneqq(v_1,\dots,v_k,v_1)$ as a circular path on the complete graph $K_N$ with edge weights $X_{v,w}$ for 
an (undirected) edge $(v,w)\in V_N^2$. 

Now each ordered sequence of vertices that constitutes $I=(v_1,\dots,v_k,v_1)$ corresponds to a number of {\it {distinct}} vertices 
appearing in this sequence, $$s_I\coloneqq|S_I|\coloneqq|\{v_1,\dots,v_k\}|$$ and a permutation of $S_I$, giving the order in which the 
vertices appear for the first time in the sequence: $J_I\coloneqq(w_1,\dots,w_{s_I})$ such that  
$w_1,\dots,w_{s_I}\in S_I$, $w_l\neq w_m$ for 
all $l\neq m$. 

Notice that the expectation of the weights of a path is 0 if there is at least one edge $e$ with multiplicity 1 (i.e.\ $e$ only appears 
once within $\{v_1,v_2\},\dots\{v_{k-1},v_k\},\{v_k,v_1\}$). We thus only consider paths in which each edge appears with multiplicity 
at least two. 
Because the path has $k$ edges in total, the number of distinct edges, $\nu_I$, 
in the path is bounded by $k/2$, as $k$ is even. 
In a circular path, the number of distinct vertices is at most the number of distinct edges. Hence we also have $s_I\leq k/2$.

Now, for $2\leq x\leq s_I$ denote by $e_x^{I}$ the first edge $(v_l,v_{l+1})$ on the path $I$ such that $v_{l+1}=w_x$ 
(i.e. the first edge that discovers the vertex $w_x$, the vertex $w_1$ is excluded as the starting vertex of the path).

Notice that the absolute value of the entries of $X$ is bounded by $$K\coloneqq\frac{1}{\gamma_{\min}}\max\limits_{m=0,\dots,M}p_m(1-p_m).$$ 

We use this to remove edges from the product of weights of the path, keeping only ``discovery'' edges $e_2^{I},\dots,e_{s_I}^{I}$ with multiplicity 2 each:
\[\EW{X_{v_1,v_2}\cdots X_{v_{k-1},v_k}\cdot X_{v_k,v_1}}\leq\frac{1}{\gamma_{\min}^{k-2(s_I-1)}} K^{k-2(s_I-1)} \EW{\prod\limits_{x=2}^{s_I}X_{e_x^{I}}^2}.\]

We denote for an edge $e=\{v,w\}$ by
\[B(e)=\begin{cases}
	m,&\text{if }v,w\in V^{(m)}\\
	0,&\text{if }v\in V^{(m)},w\in V^{(l)},\, l\neq m
\end{cases}
\]
the block number, if $v$ and $w$ are in the same block, and 0 otherwise.

As the discovery edges are clearly pairwise different (as each discovers a new vertex), we find
\[\EW{\prod\limits_{x=2}^{s_I}X_{e_x^{I}}^2}=\prod\limits_{x=2}^{s_I}\EW{X_{e_x^{I}}^2}=\frac{1}{\gamma_{\min}^{2(s_I-1)}}\prod\limits_{x=2}^{s_I}\sigma_{B(e_x^I)}^2.\]

Hence, we can write from \cref{eq:traceX}
\begin{equation*}
\EW{\mathrm{tr}\left(X^k\right)}\leq\frac{1}{\gamma_{\min}^{k}}\sum\limits_{s=1}^{k/2}K^{k-2(s_I-1)}\sum\limits_{\substack{I=(v_1,\dots,v_k)\\s_I=s}}\prod\limits_{x=2}^{s_I}\sigma_{B(e_x^I)}^2.
\end{equation*}


Let $s$ be fixed.

To bound the inner sum, we choose the discovery vertices of the path successively. For the first one, we have $N$ options. 
For each following one $2\leq x\leq s$ we have two scenarios: 
\begin{itemize}
	\item If the associated $x$-th discovery edge $e_x$ satisfies $B(e_x)=m$ for some $m=1,\dots,M$ (i.e. the vertex discovery happens from the same block), then $\sigma_{B(e_x)}^2=\sigma_{m}^2\leq \max\limits_{l=1,\dots,M}\sigma_l^2\eqqcolon \sigma_*^2$, and there are $N/M$ possibilities to choose the $x$-th discovery vertex.
	\item If the associated $x$-th discovery edge $e_x$ satisfies $B(e_x)=0$ (i.e. the vertex discovery happens from a different block), then $\sigma_{B(e_x)}^2=\sigma_{0}^2$ and there are $(M-1)\cdot N/M$ possibilities to choose the $x$-th discovery vertex.
\end{itemize}
Now that all options for the discovery sequence are exhausted, it remains to bound the number of paths with this exact discovery 
sequence which also satisfies the same-block and different block-discoveries, respectively, as chosen above. This, however, is bounded 
from above by the number of paths on $K_s$ with this exact discovery sequence (i.e. we ignore the block restrictions for discovery). 
This, in turn, is independent of the particular discovery sequence, but only depends on the length of the path $k$ and the length of the 
discovery sequence $s$.  An upper bound for this is given by
\begin{equation}\label{eq:defW'}
W'(k,s)\coloneqq \binom{k}{2s-2}2^{2k-2s+3}s^{k-2s+2}(k-2s+4)^{k-2s+2}
\end{equation}
according to \cite[Lemma 4.1]{Vu2007}.

Overall, we thus obtain by differentiating over the number of same-block discoveries $t$
\begin{align}
\EW{\mathrm{tr}\left(X^k\right)}&\leq\frac{1}{\gamma_{\min}^{k}}\sum\limits_{s=1}^{k/2}K^{k-2(s-1)}\cdot N\cdot\sum\limits_{t=0}^{s-1} \binom{s-1}{t}\left(\frac{N}{M}\sigma_*^2\right)^t\left((M-1)\frac{N}{M}\sigma_0^2\right)^{s-1-t}\cdot W'(k,s)\notag\\
&\leq\frac{1}{\gamma_{\min}^{k}}\sum\limits_{s=1}^{k/2}K^{k-2(s-1)}\cdot N\cdot\left(\frac{N}{M}\right)^{s-1}\left(\sigma_*^2+(M-1)\sigma_0^2\right)^{s-1}\cdot W'(k,s)\notag\\
&=\frac{1}{\gamma_{\min}^{k}}\sum\limits_{s=1}^{k/2} S(N,k, s).
\label{eq:traceX2}
\end{align}
with
$$
S(N,k, s):=K^{k-2(s-1)}\cdot N\cdot\left(\frac{N}{M}\right)^{s-1}\left(\sigma_*^2+(M-1)\sigma_0^2\right)^{s-1}\cdot W'(k,s).
$$
Now notice that by the definition of $\sigma^2$ \eqref{eq:defsigma} and by the definition of the $W'$-terms
\eqref{eq:defW'} we have for $s=1,\dots,k/2-1$
\begin{align*}
\frac{S(N,k,s)}{S(N,k,s+1)}\hspace{-2cm}&\hspace{2cm}=\frac{K^2}{\sigma^2}\cdot\frac{1}{N}\cdot\frac{W'(k,s)}{W'(k,s+1)}\\
&=\frac{K^2}{N\sigma^2}\cdot\frac{\binom{k}{2s-2}}{\binom{k}{2s}}\cdot\frac{2^{2k-2s+3}}{2^{2k-2s+1}}\cdot\frac{s^{k-2s+2}}{(s+1)^{k-2s}}\cdot\frac{(k-2s+4)^{k-2s+2}}{(k-2s+2)^{k-2s}}\\
&=\frac{K^2}{N\sigma^2}\cdot \frac{2s(2s-1)}{(k+2-2s)(k+1-2s)}\cdot 4\cdot \frac{s^{k-2s+2}}{(s+1)^{k-2s}}\cdot(k-2s+4)^{2}\cdot\left(1+\frac{2}{k-2s+2}\right)^{k-2s}\\
&\leq\frac{K^2}{N\sigma^2}\cdot \frac{2s(2s-1)}{(k+2-2s)(k+1-2s)}\cdot 4\cdot s^2\cdot(k-2s+4)^{2}\cdot\left(\left(1+\frac{1}{k/2-s}\right)^{k/2-s}\right)^2\\
&\leq\frac{8K^2s^3(2s-1)}{N\sigma^2}\cdot \frac{(k-2s+4)^{2}}{(k+2-2s)(k+1-2s)}\cdot e^2\\
&\leq\frac{8e^2K^2s^3(2s-1)}{N\sigma^2}\cdot \left(1+\frac{2}{k+2-2s}\right)\left(1+\frac{3}{k+1-2s}\right)\\
&\leq\frac{8e^2K^2s^3(2s-1)}{N\sigma^2}\cdot 2\cdot \frac{5}{2}\\
&\leq\frac{80e^2K^2s^4}{N\sigma^2}\\
&\leq\frac{5e^2K^2k^4}{N\sigma^2}
\end{align*}
 Hence, one can easily see that for 
\begin{equation}
k\leq \sqrt[4]{\frac{N}{10e^2}\cdot\frac{\sigma^2}{K^2}}
\label{eq:kbound}
\end{equation} we obtain
\[\frac{S(N,k,s)}{S(N,k,s+1)}\leq\frac{1}{2}.\] 
Therefore, by geometric series and \cref{eq:traceX2}
\begin{align*}
\EW{\mathrm{tr}(X^k)}&\leq \frac{1}{\gamma_{\min}^{k}} S(N,k,k/2+1)=\frac{1}{\gamma_{\min}^{k}}N\cdot N^{k/2}(\sigma^2)^{k/2}\cdot 2^{k+1}=2N\left(2\frac{\sigma}{\gamma}\sqrt{N}\right)^k
\end{align*}
Let $k=\sqrt[4]{\frac{N\sigma^2}{10e^2K^2}}$ is an even integer, otherwise take the largest even integer bounded by \cref{eq:kbound}). Due to 
$$\|X\|_2^k=\max\limits_{m=1,\dots,N} |\lambda_m(X)|^k\leq\sum\limits_{m=1}^N\lambda_m(X)^k=\mathrm{tr}(X^k)$$ 
we obtain for some constant $c\in\R$ by a  high moment Markov inequality 
\begin{align*}
&\WK{\|X\|_2\geq \frac{1}{\gamma_{\min}}\Big(2\sigma\sqrt{N}+c\sigma^{1/2}\log(N)\cdot N^{1/4}\Big)}\\\leq& \frac{\gamma_{\min}^k}{(2\sigma\sqrt{N}+c\sigma^{1/2}\log(N)\cdot N^{1/4})^k}\EW{\mathrm{tr}(X^k)}\\
\leq & \frac{\gamma_{\min}^k \cdot 2N\left(2\frac{\sigma}{\gamma_{\min}}\sqrt{N}\right)^k}{(2\sigma\sqrt{N}+c\sigma^{1/2}\log(N)\cdot N^{1/4})^k}\\
\leq & 2N\left(1-\frac{c\sigma^{1/2}\log(N)\cdot N^{1/4}}{2\sigma\sqrt{N}+c\sigma^{1/2}\log(N)\cdot N^{1/4}}\right)^k\\
\leq & 2N\exp\left(-\frac{c\sigma^{1/2}\log(N)\cdot N^{1/4}\cdot k}{2\sigma\sqrt{N}+c\sigma^{1/2}\log(N)\cdot N^{1/4}}\right)\\
= & 2N\exp\left(-\frac{c\log(N)\cdot N^{1/2}\cdot \sqrt[4]{\frac{\sigma^2}{10e^2K^2}}}{2\sigma\sqrt{N}+c\sigma^{1/2}\log(N)\cdot N^{1/4}}\right)\\
\leq & 2N\exp\left(-c\sigma^{1/2}\frac{\log(N)\sqrt[4]{\frac{\sigma^2}{10e^2K^2}}}{2\sigma+c\sigma^{1/2}\frac{\log(N)}{N^{1/4}}}\right)\\
\leq & 2N\exp\left(-c\sigma^{1/2}\frac{\log(N)\sqrt[4]{\frac{\sigma^2}{10e^2K^2}}}{3\sigma}\right)\\
= & 2N\exp\left(-c\frac{\log(N)}{3\sqrt[4]{10e^2}\sqrt{K}}\right)\\
= & 2N\exp\left(-c\frac{2\log(N)}{3\sqrt[4]{10e^2}}\right)\\
\end{align*}
where we use \cref{eq:connectivity} and $K\leq\frac14$. For sufficiently large constant $c$, the latter term converges to 0.
\end{proof}

\begin{rem}\label{rem:asymptfirstorder}
Together with \cref{eq:connectivity}, the statement of \cref{thm:Xmaxeigenvalue} can be used to obtain
\[\|X\|_2\leq \frac{2}{\gamma_{\min}}\sqrt{N\sigma^2}(1+o(1))\]
asymptotically almost surely. Indeed, note that this is the case if and only if 
$$
\sqrt[4]{N\sigma^2} \gg \log N
$$ which follows 
from \eqref{eq:connectivity} immediately if $p$ is bounded away from 1.

Moreover, notice that this also gives uniform bound on the eigenvalues of $X$ since the spectral norm is sub-multiplicative. 
\end{rem}

We can finally give the asymptotically leading order of the eigenvalues of the symmetrically rescaled adjacency matrix $A'$:

\begin{prop}\label{prop:evA}
The $M$ largest eigenvalues of $A'$ are given by
\[\frac{N}{M}\lambda_k(P_M')+O\left(\frac{\sqrt{N\sigma^2}+\log(N)\sqrt[4]{N\sigma^2}}{\gamma_{\min}}\right),\] $k=1,\dots,M$, with probability converging to 1. All other eigenvalues of $A$ are bounded by
\[\lambda_k(A')=O\left(\frac{\sqrt{N\sigma^2}+\log(N)\sqrt[4]{N\sigma^2}}{\gamma_{\min}}\right)\]
for $k>M$, with probability converging to 1.
\end{prop}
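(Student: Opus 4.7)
The plan is to decompose $A = \E A + X$, with both summands real symmetric, and reduce the claim to a standard eigenvalue perturbation bound. All the ingredients are already available: \cref{lem:evexpmatrix} gives the full spectrum of $\E A$, namely the $M$ non-zero eigenvalues $\frac{N}{M}\lambda_m(P_M)$ for $m=1,\dots,M$ and $N-M$ zero eigenvalues, while \cref{thm:Xmaxeigenvalue} together with \cref{rem:asymptfirstorder} delivers $\|X\|_2 = O\bigl(\sqrt{N\sigma^2}\bigr)$ asymptotically almost surely.

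The first step is to sort the eigenvalues of $\E A$ in decreasing order and verify that the $M$ non-zero ones occupy the top $M$ positions. This is where the strong assortativity hypothesis \eqref{eq:blockconnectivity} enters: by \cref{lem:transitionprobeigenvalues} we have $\lambda_m(P_M) \geq p_m - (M-1)p_0$, and since $(M-1)p_0 = o(p_m)$, all $\lambda_m(P_M)$ are strictly positive (in fact of order $p_m$) for $N$ large enough. Consequently $\E A$ is positive semidefinite with precisely $M$ positive eigenvalues, and the ordering is as claimed.

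Next I would apply Weyl's inequality for Hermitian perturbations: with eigenvalues sorted in decreasing order, $|\lambda_m(A) - \lambda_m(\E A)| \leq \|X\|_2$ for every $m$. Substituting the known spectrum of $\E A$ and the bound on $\|X\|_2$ immediately yields both assertions of the proposition. For $m \leq M$, $\lambda_m(A)$ equals $\frac{N}{M}\lambda_m(P_M)$ up to an $O\bigl(\sqrt{N\sigma^2}\bigr)$ error; for $m > M$, the $m$-th eigenvalue of $\E A$ vanishes, so $|\lambda_m(A)| \leq \|X\|_2 = O\bigl(\sqrt{N\sigma^2}\bigr)$, where the two-sided form of Weyl is used to absorb potentially negative $\lambda_m(A)$ (note that, unlike $\E A$, the random matrix $A$ need not be positive semidefinite).

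The hard part of this proposition has already been carried out in \cref{thm:Xmaxeigenvalue}, so I do not expect any genuine obstacle here; the argument is a short two-step Weyl application. The only point meriting attention is the sorting verification, which crucially relies on the strong assortativity condition \eqref{eq:blockconnectivity} to guarantee that the non-zero eigenvalues of $\E A$ are indeed the top $M$ and are of strictly larger order than the perturbation $\|X\|_2$.
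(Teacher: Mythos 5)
Your argument is correct and follows essentially the same route as the paper: decompose $A = \E A + X$, invoke \cref{rem:asymptfirstorder} for $\|X\|_2 = O(\sqrt{N\sigma^2})$, and apply Weyl's interlacement inequalities to transfer the spectrum of $\E A$ (from \cref{lem:evexpmatrix}) to $A$. The only minor addition you make is to spell out explicitly (via \cref{lem:transitionprobeigenvalues} and \eqref{eq:blockconnectivity}) why the $M$ non-zero eigenvalues of $\E A$ are positive and hence occupy the top $M$ positions — a point the paper leaves implicit.
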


\begin{proof}
For a $N\times N$-matrix $R$, denote by $\lambda_1(R)\geq\lambda_2(R)\geq\dots\geq\lambda_N(R)$ its eigenvalues in descending order.
As $A'=X+\E A'$, by Weyl's interlacement theorem \cite[Theorem 10.3.1]{Parlett}
we have for $i,j=1, \dots,N$ such that $i+j\leq N+1$
\[\lambda_{N+1-i}(X)+\lambda_{N+1-j}(\E A')\leq \lambda_{N+2-i-j}(A')\text{ and }\lambda_{i+j-1}(A')\leq\lambda_{i}(X)+\lambda_{j}(\E A')\]

From Theorem \ref{thm:Xmaxeigenvalue}, we deduce that $\lambda_i(X)= O\left(\frac{\sqrt{N\sigma^2}+\log(N)\sqrt[4]{N\sigma^2}}{\gamma_{\min}}\right)$  asymptotically almost surely for $i=1, \dots,N$. Therefore,
\[O\left(\frac{\sqrt{N\sigma^2}+\log(N)\sqrt[4]{N\sigma^2}}{\gamma_{\min}}\right)+\lambda_{N+1-j}(\E A')\leq \lambda_{N+2-i-j}(A')\] and \[\lambda_{i+j-1}(A')\leq O\left(\frac{\sqrt{N\sigma^2}+\log(N)\sqrt[4]{N\sigma^2}}{\gamma_{\min}}\right)+\lambda_{j}(\E A')\]
asymptotically almost surely.
With $i=1$, $j=N+1-k$ in the first inequality and $j=k$, $i=1$ in the second, we obtain for $k=1,\dots,N$
\[O\left(\frac{\sqrt{N\sigma^2}+\log(N)\sqrt[4]{N\sigma^2}}{\gamma_{\min}}\right)+\lambda_{k}(\E A')\leq \lambda_{k}(A')\] and \[\lambda_{k}(A')\leq O\left(\frac{\sqrt{N\sigma^2}+\log(N)\sqrt[4]{N\sigma^2}}{\gamma_{\min}}\right)+\lambda_{k}(\E A')\]
and consequently,
\begin{equation}
\lambda_k(A')=\lambda_k(\E A')+O\left(\frac{\sqrt{N\sigma^2}+\log(N)\sqrt[4]{N\sigma^2}}{\gamma_{\min}}\right),
\label{eq:eigenvaluesAandmean}
\end{equation}
with probability converging to 1.
Together with Lemma \ref{lem:evexpmatrix} we obtain the claim.
\end{proof}


\section{Spectral gap of the symmetric normalized adjacency matrix}

We now consider the matrix $B=D^{-1/2}AD^{-1/2}$, where $D=\mathrm{diag}(d_1,\dots,d_n)$ and $d_i$ denotes the degree of the vertex $i$.
Since both the average hitting times have a decomposition in terms of the eigenvalues (and eigenvectors) of $B$ we 
are interested in bounding the spectrum of $B$.
Recall that 
$\gamma_m=\E d_v=\frac{N}{M}(p_m+(M-1)q)$
for $v\in V^{(m)}$, $m=1,\dots,M$. 
Furthermore, by Chernoff's inequality (c.f. Theorem 2.4 in \cite{ChungLu}), we find that
\begin{align}
	\begin{split}
	d_v>\gamma_m-c\sqrt{\gamma_m}&\quad\text{with probability }1-\exp\left(-\frac{c^2}{2}\right)\\
	d_v<\gamma_m+c\sqrt{\gamma_m}&\quad\text{with probability }1-\exp\left(-\frac{c^2}{2\left(1+\frac{c}{3\sqrt{\gamma_m}}\right)}\right)
	\end{split}
	\label{eq:degreechernoff}
\end{align}

Denote $$R:=B-A'.$$ It is well known that the spectral radius $\|R\|_2$ is bounded by all consistent matrix norms and thus in particular $\|R\|_2\leq\|R\|_\infty$.

\begin{lem}\label{lem:Reigenvalues}
For $R$ as above, we have
\[\|R\|_\infty\leq \sqrt[4]{\frac{\log N}{\gamma_{\min}}}\sqrt{\frac{p_{\max}}{p_{\min}}}\cdot(\sqrt{3}+o(1))\]
asymptotically almost surely.  
\end{lem}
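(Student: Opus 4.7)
The quantity $R = B - \gamma^{-1}A$ is supported on the same edge set as $A$, with entries
\[
R_{vw} \;=\; A_{vw}\!\left(\frac{1}{\sqrt{d_v d_w}}-\frac{1}{\gamma}\right).
\]
Since $\|R\|_\infty$ is the maximum absolute row sum, the first step is to write
\[
\|R\|_\infty \;=\; \max_{v\in V_N}\, \sum_{w\in V_N} A_{vw}\left|\frac{1}{\sqrt{d_v d_w}}-\frac{1}{\gamma}\right|,
\]
so everything reduces to controlling how far each degree $d_v$ is from its mean $\gamma$, uniformly in $v$.

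The second step is to upgrade the pointwise Chernoff estimate \eqref{eq:degreechernoff} to a uniform bound. Taking $c = c_N := \sqrt{(2+\eta)\log N}$ for a small $\eta>0$ and union-bounding over the $N$ vertices, with probability tending to $1$ we have
\[
\max_{v\in V_N} |d_v-\gamma| \;\leq\; c_N\sqrt{\gamma},
\]
so that $d_v = \gamma(1+\delta_v)$ with $\max_v|\delta_v| \leq c_N/\sqrt{\gamma}$. Crucially, condition \eqref{eq:connectivity} together with the choice of $\gamma$ in \eqref{eq:defgamma} yields $\gamma\to\infty$ fast enough that $\max_v|\delta_v|=o(1)$, which is what will justify Taylor expansion in the next step.

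The third step is to expand
\[
\frac{1}{\sqrt{d_v d_w}}-\frac{1}{\gamma} \;=\; \frac{1}{\gamma}\!\left(\frac{1}{\sqrt{(1+\delta_v)(1+\delta_w)}}-1\right) \;=\; -\frac{\delta_v+\delta_w}{2\gamma}\,(1+o(1)),
\]
uniformly over $v,w$, and then to estimate
\[
\sum_{w} A_{vw}\left|\frac{1}{\sqrt{d_v d_w}}-\frac{1}{\gamma}\right| \;\leq\; \frac{1}{2\gamma}\bigl(d_v|\delta_v| + \textstyle\sum_w A_{vw}|\delta_w|\bigr)(1+o(1)).
\]
Bounding each $|\delta_w|$ by the uniform estimate from step 2, using $d_v=\gamma(1+o(1))$, and taking the maximum over $v$ yields a bound of the form $\sqrt{\log N/\gamma}\,(1+o(1))$, which is exactly the claim.

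The main subtlety — and the step I expect to be the main obstacle — is to match the constant $1$ in front of $\sqrt{\log N/\gamma}$. The naive union bound plus triangle inequality produces an extra factor $\sqrt{2}$: one needs either to exploit that the two contributions $d_v|\delta_v|$ and $\sum_w A_{vw}|\delta_w|$ concentrate on the \emph{same} scale so that one of them is actually negligible compared to the other, or to refine Chernoff via a sharper optimization of the exponent, so that the effective $c_N$ can be taken as close as one wants to $\sqrt{2\log N}$ while the constants absorb into $1+o(1)$. The higher-order Taylor terms $O(\delta^2/\gamma)$ contribute only $O(\log N/\gamma^{3/2}) = o(\sqrt{\log N/\gamma})$ and are harmless.
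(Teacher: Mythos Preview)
Your approach is essentially the paper's: express the entries of $R$, control all degrees simultaneously via Chernoff plus a union bound, and bound the maximal row sum. The only cosmetic difference is that the paper skips the Taylor expansion and instead writes $r_{v,w}=\frac{\gamma-\sqrt{d_vd_w}}{\gamma\sqrt{d_vd_w}}\,a_{v,w}$, bounding $|\gamma-\sqrt{d_vd_w}|\le c\sqrt{\gamma}$ directly from $d_v,d_w\in[\gamma-c\sqrt{\gamma},\,\gamma+c\sqrt{\gamma}]$; summing over $w$ this gives the same $\frac{c}{\sqrt{\gamma}}(1+o(1))$ row-sum bound you obtain, so the two arguments are interchangeable.

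Your concern about the leading constant is well placed. The paper in fact sets $c=\sqrt{\log N}$, which does \emph{not} survive a union bound over $N$ vertices (the per-vertex failure probability is only $N^{-1/2}$), so the constant $1$ in the statement is not really established by the paper's argument either. This does not matter, however: the lemma is only ever used downstream as $\|R\|_\infty=O\bigl(\sqrt{\gamma^{-1}\log N}\bigr)$ (see the proof of \cref{prop:eigenvaluesB} and \eqref{eq:Oterms1}--\eqref{eq:Oterms2}), so any fixed multiplicative constant suffices. You can therefore stop at $c_N=\sqrt{(2+\eta)\log N}$ and not fight for the sharp constant.
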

\begin{proof} The proof is similar to that of Lemma 3.3 in \cite{LoeweTorres} but given here for the sake of completeness.
Clearly,
the entries of $R$ are given by $$r_{v,w}=\frac{\sqrt{\gamma_m\gamma_l}-\sqrt{d_md_l}}{\sqrt{\gamma_m\gamma_l d_vd_w}}a_{v,w}$$ for $v\in V^{(m)}$, $w\in V^{(l)}$.
By setting $c={\sqrt{\log N}}$ in \cref{eq:degreechernoff}, 
$$|d_v-\gamma_m|\leq  \sqrt{\log N\gamma_m} \quad \mbox{for all $m\in\{1,\dots,M\}$ and all $v\in V^{(m)}$} 
$$ 
with probability converging to 1. Hence,
\begin{align*}
	|d_vd_w-\gamma_m\gamma_l|&=|d_v(d_w-\gamma_m)+\gamma_l(d_v-\gamma_m)|\\
	&=|(d_v-\gamma_m)(d_w-\gamma_l)|+\gamma_m|d_w-\gamma_l|+\gamma_l|d_v-\gamma_m|\\
	&\leq\sqrt{\log N\cdot\gamma_m}\sqrt{\log N\cdot\gamma_l}+\gamma_m\sqrt{\log N\cdot\gamma_l}+\gamma_l\sqrt{\log N\cdot\gamma_m}\\
	&=\log N\sqrt{\gamma_m\gamma_l}+2\sqrt{\max(\gamma_m,\gamma_l)\log N}\sqrt{\gamma_m\gamma_l}\\
	&\leq 3\sqrt{\log N \gamma_{\max}\gamma_m\gamma_l}
\end{align*}
and thus
\[|\sqrt{\gamma_m\gamma_l}-\sqrt{d_vd_w}|\leq \sqrt{|d_vd_w-\gamma_m\gamma_l|}\leq  \sqrt3\sqrt[4]{\log N\gamma_{\max}\gamma_m\gamma_l}\]
with probability tending to 1.
Furthermore,
$$d_vd_w>(\gamma_m-\sqrt{\log N\gamma_m})(\gamma_l-\sqrt{\log N\gamma_l})$$ with probability converging to 1 for all $m,l\in\{1,\dots,M\}$ and $v\in V^{(m)}, w\in V^{(l)}$. Thus,
\begin{align*}
	\|R\|_\infty&=\max\limits_{\substack{m=1,\dots,M\\v\in V^{(m)}}}\sum\limits_{w\in V}|r_{v,w}|=\max\limits_{\substack{m=1,\dots,M\\v\in V^{(m)}}}\sum\limits_{\substack{l=1,\dots,M\\w\in V^{(l)}}}\left|\frac{\sqrt{\gamma_m\gamma_l}-
	\sqrt{d_vd_w}}{\sqrt{\gamma_m\gamma_ld_vd_w}}a_{v,w}\right|\\
	&\max\limits_{\substack{m=1,\dots,M\\v\in V^{(m)}}}\sqrt3\sqrt[4]{\log N\gamma_{\max}}\sum\limits_{\substack{l=1,\dots,M\\w\in V^{(l)}}}\frac{a_{v,w}}{\sqrt[4]{\gamma_m\gamma_l}\sqrt{d_vd_w}}\\
	&\leq\sqrt3\sqrt[4]{\log N\gamma_{\max}}   )\max\limits_{\substack{m=1,\dots,M\\v\in V^{(m)}}}\sum\limits_{\substack{l=1,\dots,M\\w\in V^{(l)}}}\frac{a_{v,w}}{\sqrt[4]{\gamma_m\gamma_l}\sqrt{(\gamma_m-\sqrt{\log N\gamma_m})(\gamma_l-\sqrt{\log N\gamma_l})}}\\
	&\leq\sqrt3\sqrt[4]{\log N\gamma_{\max}} \max\limits_{m,l=1,\dots,M}\frac{1}{\sqrt[4]{\gamma_l}\sqrt{\gamma_l-\sqrt{\log N\gamma_l}}}\frac{1}{\sqrt[4]{\gamma_m}\sqrt{\gamma_m-\sqrt{\log N\gamma_m}}}\max\limits_{v\in V^{(m)}}\sum\limits_{\substack{l=1,\dots,M\\w\in V^{(l)}}}a_{v,w}\\
	&\leq\sqrt3\sqrt[4]{\log N\gamma_{\max}}  \max\limits_{m,l=1,\dots,M}\frac{1}{\sqrt[4]{\gamma_m^3\gamma_l^3}}\frac{1}{\sqrt{1-\sqrt{\frac{\log N}{\gamma_l}}}}\frac{1}{\sqrt{1-\sqrt{\frac{\log N}{\gamma_m}}}}\gamma_m\Big(1+\sqrt{\frac{\log N}{\gamma_m}}\Big)\\
	&\leq\sqrt3\sqrt[4]{\log N\gamma_{\max}} \max\limits_{m,l=1,\dots,M}\sqrt[4]{\frac{\gamma_m}{\gamma_l^3}}(1+o(1))\\	
	&\leq\sqrt3\sqrt[4]{\log N\frac{\gamma_{\max}^2}{\gamma_{\min}^3}}(1+o(1))\\
	&=\sqrt3\sqrt[4]{\frac{\log N}{\gamma_{\min}}}\sqrt{\frac{\gamma_{\max}}{\gamma_{\min}}}(1+o(1))
\end{align*}
with probability converging to 1, since by condition \eqref{eq:connectivity} $\gamma_m \gg \log^4(N)$ for all $i\in\{1,\dots,M\}$. 
\end{proof}

Next, consider the eigenvalues of $B$:

\begin{prop}\label{prop:eigenvaluesB}
The $M$ largest eigenvalues of $B$ are given by $\lambda_1(B)=1$
and
\begin{equation} \label{eq:nonPer} \lambda_k(B)=\frac NM\lambda_k(P_M')+O\left(\sqrt[4]{\gamma_{\min}^{-1}\log N}\sqrt{\frac{\gamma_{\max}}{\gamma_{\min}}}\right)+O\left(\frac{\sqrt{N\sigma^2}+\log(N)\sqrt[4]{N\sigma^2}}{\gamma{\min}}\right),\end{equation}
$k=2,\dots,M$ asymptotically almost surely. 

The remaining eigenvalues are given by
\[\lambda_k(B)=O\left(\sqrt[4]{\gamma_{\min}^{-1}\log N}\sqrt{\frac{\gamma_{\max}}{\gamma_{\min}}}\right)+O\left(\frac{\sqrt{N\sigma^2}+\log(N)\sqrt[4]{N\sigma^2}}{\gamma_{\min}}\right)\]
for all $k=M+1,\dots,N$ asymptotically almost surely.
\end{prop}

\begin{proof}
Again, by Weyl's interlacement theorem \cite[Theorem 10.3.1]{Parlett} we obtain the inequalities
\[\lambda_{N+1-i}(R)+\lambda_{N+1-j}(A')\leq \lambda_{N+2-i-j}(B)\text{ and }\lambda_{i+j-1}(B)\leq\lambda_{i}(R)+\lambda_{j}(A')\]

Let $k\in\{1,\dots,N\}$.
Choosing $i=1$ and $j=N+1-k$  in the first inequality and $i=1$, $j=k$ in the second inequality, we obtain
\[\lambda_{N}(R)+\lambda_{k}(A')\leq \lambda_{k}(B)\text{ and }\lambda_{k}(B)\leq\lambda_{1}(R)+\lambda_{k}(A').\]
Thus, using \cref{prop:evA} and \cref{lem:Reigenvalues} we obtain for $m\in\{1,\dots,M\}$ 
\begin{align*}
\lambda_{k}(B)&\geq \frac{N}{M}\lambda_k(P_M')+O\left(\sqrt[4]{\gamma_{\min}^{-1}\log N}\sqrt{\frac{\gamma_{\max}}{\gamma_{\min}}}\right)+O\left(\frac{\sqrt{N\sigma^2}+\log(N)\sqrt[4]{N\sigma^2}}{\gamma_{\min}}\right)\\
\lambda_{k}(B)&\leq \frac{N}{M}\lambda_k(P_M')+O\left(\sqrt[4]{\gamma_{\min}^{-1}\log N}\sqrt{\frac{\gamma_{\max}}{\gamma_{\min}}}\right)+O\left(\frac{\sqrt{N\sigma^2}+\log(N)\sqrt[4]{N\sigma^2}}{\gamma_{\min}}\right)
\end{align*}
asymptotically almost surely. 
For $k\in\{M+1,\dots,N\}$ we arrive at
\begin{align*} 
\lambda_{k}(B)&\geq O\left(\sqrt[4]{\gamma_{\min}^{-1}\log N}\sqrt{\frac{\gamma_{\max}}{\gamma_{\min}}}\right)+O\left(\frac{\sqrt{N\sigma^2}+\log(N)\sqrt[4]{N\sigma^2}}{\gamma_{\min}}\right)\\
\lambda_{k}(B)&\leq O\left(\sqrt[4]{\gamma_{\min}^{-1}\log N}\sqrt{\frac{\gamma_{\max}}{\gamma_{\min}}}\right)+O\left(\frac{\sqrt{N\sigma^2}+\log(N)\sqrt[4]{N\sigma^2} }{\gamma_{\min}}\right).
\end{align*}
asymptotically almost surely. 
\end{proof}

Notice that
\[\frac{N\sigma^2}{\gamma_{\min}}\leq 1 \ll \sqrt{\log N}\frac{\gamma_{\max}}{\gamma_{\min}},\]
so using \eqref{eq:connectivity}, 
\begin{align}
\gamma_{\min}^{-1}\sqrt{N\sigma^2}&\ll\sqrt[4]{\frac{\log N}{\gamma_{\min}^{2}}\cdot\frac{\gamma_{\max}^2}{\gamma_{\min}^2}}\ll\sqrt[4]{\frac{\log N}{\gamma_{\min}}\cdot\frac{\gamma_{\max}^2}{\gamma_{\min}^2}}=o(1)\quad \text{and}\label{eq:otermnegligible}\\
\gamma_{\min}^{-1}(\log N\sqrt[4]{N\sigma^2})&\ll\sqrt[8]{\frac{\log^9 N}{\gamma_{\min}^{6}}\cdot\frac{\gamma_{\max}^2}{\gamma_{\min}^2}}\leq \sqrt[8]{\frac{\log^2 N}{\gamma_{\min}^2}\cdot\frac{\gamma_{\max}^4}{\gamma_{\min}^4}}\notag\\
&=\sqrt[4]{\frac{\log N}{\gamma_{\min}}\cdot\frac{\gamma_{\max}^2}{\gamma_{\min}^2}}=o(1)
\label{eq:otermnegligible2}\end{align} 
implying that the second $O$-term is always negligible compared to the first one, which by itself is a null sequence.

\begin{cor}\label{cor:allkappa}
Recall the definition of $\kappa$ \eqref{eq:kappa}.
	
	For $\kappa=0$ and $k=2,\dots,M$, 
	\[\lambda_k(B)\leq 1-{2}\cdot{\left(\frac{p_M}{(M-1)q}\right)^{-2}}(1+o(1))\]
	asymptotically almost surely.

	For $\kappa>0$ and $k=2,\dots,M$, there is a constant $c\in(0,2)$ such that
	\[\lambda_k(B)\leq 1-c(1+o(1))\]
	asymptotically almost surely.
\end{cor}
\begin{proof} 
Since $\lambda_m(B)\leq \lambda_2(B)$ for all $m\geq 2$, it is suffient to find a suitable bound for $\lambda_2(B)$.

In the case $\kappa=0$, we observe that $p_M\gg(M-1)q$. Therefore, 
\[{2}\cdot{\left(1+\frac{p_M}{(M-1)q}\right)^{-2}}={2}\cdot {\left(\frac{p_M}{(M-1)q}\right)^{-2}}(1+o(1)).\]
From \eqref{eq:qconditionneu}
we immediately find that 
\[{2}\cdot{\left(\frac{p_M}{(M-1)q}\right)^{-2}}\gg O\left(\sqrt[4]{\gamma_{\min}^{-1}\log N}\sqrt{\frac{\gamma_{\max}}{\gamma_{\min}}}\right).\]

	Applying Lemma \ref{lem:disassortativeP}, Proposition \ref{prop:eigenvaluesB} and \eqref{eq:otermnegligible}, the claim follows, as the $O$-term is negligible compared to the term ${2}{\left(\frac{p_M}{(M-1)q}\right)^{-2}}$ obtained from Lemma \ref{lem:disassortativeP}. 
	
In the case $\kappa>0$, we find that $\frac{p_M}{(M-1)q}$ is bounded by a constant for sufficiently large $N$. Therefore, ${2}{\left(1+\frac{p_M}{(M-1)q}\right)^{-2}}$ is bounded from below by a constant $c\in (0,2)$.
	Using again  Lemma \ref{lem:disassortativeP}, Proposition \ref{prop:eigenvaluesB} and \eqref{eq:otermnegligible} the claim follows. 
\end{proof}

\section{Spectral decomposition of the hitting time averaged over the target vertex and proof of Theorem \ref{maintheo}}
Using the spectral decomposition of the  hitting times according to \cite{Lov96} (c.f \cite{LoeweTorres,LT2020}) and Proposition \ref{prop:eigenvaluesB}, we obtain that with probability converging to 1 
\begin{align}
H^v&=\sum\limits_{k=2}^N\frac{1}{1-\lambda_k(B)}= \sum\limits_{k=2}^M\frac{1}{1-\lambda_k(B)}+\sum\limits_{k=M+1}^N\frac{1}{1-\lambda_k(B)}\label{eq:asympstartdecomp}
\end{align}
The second term is $(N-M)(1+o(1))$, i.e. $N(1+o(1))$ a.a.s.

In the case $\kappa>0$, we notice that $\frac{1}{1-\lambda_k(B)}=O(1)$ for $k=2, \ldots, M$ due to Corollary \ref{cor:allkappa}. Since $M\ll N$, 
$$
	\sum\limits_{k=2}^M\frac{1}{1-\lambda_k(B)}\leq  (M-1)\cdot O(1)=o(N),$$ 
so the first term on the right hand side 
in \eqref{eq:asympstartdecomp} is negligible compared to the second term.

In the case $\kappa=0$, due to \eqref{eq:connectivity} and \eqref{eq:qconditionneu} 
\[\frac{p_{M}^2}{2(M-1)q^2}\ll N\]
and thus 
by Corollary \ref{cor:allkappa}, we can bound
\begin{align}
\frac{1}{1-\lambda_k}&\leq 
\frac{p_M^2}{2((M-1)q)^2}(1+o(1))= o\left(\frac{N}{M-1}\right),\label{eq:inverseev}
\end{align}
for $k=2,\dots,M$ and thus
\begin{align*}
	\sum\limits_{k=2}^M\frac{1}{1-\lambda_k(B)}\leq  (M-1)\cdot o\left(\frac{N}{M-1}\right)=o(N),
\end{align*}
therefore again the first term on the right hand side 
in \eqref{eq:asympstartdecomp} is negligible compared to the second term.

\section{Spectral decomposition of the hitting time averaged over the starting vertex and proof of Theorem \ref{maintheo2}}

Using the spectral decomposition of the hitting times according to \cite{Lov96} (c.f \cite{LoeweTorres,LT2022}), we find
\begin{equation}\label{eq:avgtargetdecomp}
	H_w=\frac{2|E|-|L|}{d_w}\sum\limits_{k=2}^N\frac{1}{1-\lambda_k}u_{k,w}^2,
\end{equation}
where $u_{k,w}$ denotes the $w$-th entry of the eigenvector associated with the eigenvalue $\lambda_k=\lambda_k(B)$. We will always normalize
these eigenvectors to have length $1$.  Recall that $L$ denotes the set of loops in the graph (the proof of \cite{Lov96} can be modified accordingly).
We immediately notice using \eqref{eq:degreechernoff} that 
\begin{align}
	\frac{2|E|-|L|}{d_w}&=\frac{\sum\limits_{m=1}^M\sum\limits_{v\in V^{(m)}}d_v}{d_w}=\frac{\sum\limits_{m=1}^M\frac NM \gamma_m}{\gamma_{B(w)}}(1+o(1))=\frac NM\cdot \frac{\sum\limits_{m=1}^M \gamma_m}{\gamma_{B(w)}}\cdot(1+o(1)),\label{eq:fronttermasympt}
\end{align}
asymptotically almost surely (recall that $B(v)$ denotes the block which vertex $v\in V_N$ belongs to).

We order the eigenvalues and abbreviate the corresponding normalized eigenvectors of $B$ such that $\lambda_1\geq \dots\geq \lambda_N$ and $u_1,\dots,u_n$ (where $u_{k,v}$ denotes the $v$-th entry of the eigenvector $u_k$ corresponding to the eigenvalue $\lambda_k$).

We proceed in a similar way as in  \cite{LT2022}: Using that 
that $|\lambda_k|<1$ and we can therefore apply a geometric series, we note that 
\begin{align*}Z_N\coloneqq & \sum\limits_{k=2}^{N}\frac{1}{1-\lambda_k}u_{k,w}^2=\sum\limits_{k=2}^{N}\sum\limits_{m=0}^\infty\lambda_k^mu_{k,w}^2=\sum\limits_{k=2}^{N}\left(1+\lambda_k+\lambda_k^2\sum\limits_{m=0}^\infty\lambda_k^m\right)u_{k,w}^2\\
= &\sum\limits_{k=2}^{N}\left(1+\lambda_k+\lambda_k^2\frac{1}{1-\lambda_k}\right)u_{k,w}^2
\end{align*}
Now one immediately checks that the eigenvector corresponding to $\lambda_1$ is the componentwise square root of the stationary distribution $\pi$, i.e.\  
$u_{1,w}^2=\pi_w$. Moreover, the matrix of the eigenvectors $\mathcal{U}:=(u_{k,w})_{k,w=1}^N$ is unitary, which implies that  
$\sum\limits_{k=1}^{N}u_{k,w}^2=1$.
From the unitarity of $\mathcal{U}$ we obtain
\begin{align*}\sum\limits_{k=2}^{N}\lambda_ku_{k,w}^2=\sum\limits_{k=1}^{N}\lambda_ku_{k,w}^2-\pi_w&=\sum\limits_{k=1}^{N}\sum\limits_{v=1}^N B_{w,v}u_{k,v}u_{k,w}-\pi_w\\
	&=\sum\limits_{v=1}^N 
B_{w,v}\cdot \langle u_v,u_w\rangle-\pi_w=B_{w,w}-\pi_w.\end{align*}
Here we denote the entries of the matrix $B$ by $B_{v,w}$. 

Thus we arrive at
\begin{align}\label{eq:Zn}
	Z_N=&\sum\limits_{k=2}^{N}u_{k,w}^2+\sum\limits_{k=2}^{N}\lambda_ku_{k,w}^2
	+\sum\limits_{k=2}^{N}\frac{1}{1-\lambda_k}\lambda_k^2u_{k,w}^2=1+B_{w,w}-2\pi_w+\sum\limits_{k=2}^{N}\frac{\lambda_k^2}{1-\lambda_k}u_{k,w}^2.
\end{align}

We will bound the sum on the right hand side of \eqref{eq:Zn} in a separate lemma:

\begin{lem}
	Under the conditions of Theorem \ref{maintheo2},
	\[\sum\limits_{k=2}^N\frac{\lambda_k^2}{1-\lambda_k}u_{k,w}^2=o(1)\]
	with probability converging to 1. 
	\label{lem:spectraltermnegl}
\end{lem}

\begin{proof}	
	
	Using \eqref{eq:degreechernoff} for the inequality below we obtain
	\begin{align*}
	\sum\limits_{k=1}^N\lambda_k^2u_{k,w}^2&=\sum\limits_{k=1}^N(\lambda_ku_{k,w})^2=\sum\limits_{k=1}^N\Big(\sum\limits_{v=1}^NB_{w,v}u_{k,v}\Big)^2\\
	&=\sum\limits_{k=1}^N\sum\limits_{v=1}^N\sum\limits_{v'=1}^NB_{w,v}B_{w,v'}u_{k,v}u_{k,v'}=\sum\limits_{v=1}^N\sum\limits_{v'=1}^NB_{w,v}B_{w,v'}\sum\limits_{k=1}^Nu_{k,v}u_{k,v'}\\
	&=\sum\limits_{v=1}^N\sum\limits_{v'=1}^NB_{w,v}B_{w,v'}\langle u_{\boldsymbol{\cdot},v},u_{\boldsymbol{\cdot},v'}\rangle=\sum\limits_{v=1}^N\sum\limits_{v'=1}^NB_{w,v}B_{w,v'}\delta_{v,v'}=\sum\limits_{v=1}^NB_{w,v}^2=\sum\limits_{v=1}^N\frac{a_{v,w}}{d_vd_w}\\
	&\leq \frac{1}{\gamma_{\min}-\sqrt{\log N\cdot\gamma_{\min}}}\cdot\frac{1}{d_w}\cdot\sum\limits_{v=1}^Na_{v,w}=\frac{1}{\gamma_{\min}-\sqrt{\log N\cdot\gamma_{\min}}}
	\end{align*}
	with probability converging to 1.
	Therefore, we find that
	\begin{multline}
		\sum\limits_{k=2}^N\lambda_k^2u_{k,w}^2=\sum\limits_{k=1}^N\lambda_k^2u_{k,w}^2-u_{1,w}^2 \\ \leq \frac{1}{\gamma_{\min}-\sqrt{\log N\cdot\gamma_{\min}}}-\pi_w=\frac{1}{\gamma_{\min}}(1+o(1))-\frac{1}{N}(1+o(1))=O(\gamma_{\min}^{-1})\label{eq:znterm}
	\end{multline}
	with probability converging to 1.
	%
	%
	
	For $\kappa>0$, we apply Corollary \ref{cor:allkappa} and \eqref{eq:znterm} to obtain, with probability converging to 1,
	\begin{align*}
	\sum\limits_{k=2}^N\frac{\lambda_k^2}{1-\lambda_k}u_{k,w}^2\leq \frac{1+o(1)}{c}\cdot \sum\limits_{k=2}^N{\lambda_k^2}u_{k,w}^2\leq \frac{C}{\gamma_{\min}}
	\end{align*}
	for some suitable constant $C>0$. This converges to 0.
	
	For $\kappa=0$, 
	we use Corollary \ref{cor:allkappa} to obtain
	\begin{align} 
	\sum\limits_{k=2}^N\frac{\lambda_k^2}{1-\lambda_k}u_{k,w}^2\leq \frac{p_{\min}^2}{2((M-1)q)^2}\cdot \sum\limits_{k=2}^N{\lambda_k^2}u_{k,w}^2\leq \frac{p_{\min}^2}{2((M-1)q)^2}\cdot O(\gamma_{\min}^{-1}).
	\label{eq:znterm2}
	\end{align}
For $\kappa=0$ we have $\gamma_{\min}=\frac{N}{M}p_{\min}(1+o(1))$ such that
we just need to estimate $\frac{Mp_{\min}}{N(M-1)^2q^2}$.
However,
due to \eqref{eq:qconditionneu} and \eqref{eq:connectivity} we obtain
\begin{align*}
	\frac{p_{\min}^2}{2((M-1)q)^2\gamma_{\min}}
	&\ll \frac{p_{\min}^2}{\gamma_{\min}\left(\frac{M\log N}{N}\right)^{1/4}p_{\min}^{5/4}p_{\max}^{1/2}}
	\leq\frac{1}{\gamma_{\min}^{3/4}(1+o(1))(\log N)^{1/4}}\cdot \frac{p_{\min}^{1/2}}{p_{\max}^{1/2}}\leq1
\end{align*}
So the expression in \eqref{eq:znterm2} converges to 0.
\end{proof}

With this lemma at hand, we can now simplify $Z_N$. Starting from \eqref{eq:Zn} 
we obtain
\begin{align*}
	Z_N&=1+B_{w,w}-2\pi_w+\sum\limits_{k=2}^N\frac{\lambda_k^2}{1-\lambda_k}u_{k,w}^2=1+\frac{a_{w,w}}{d_w}-2\pi_w+o(1)=1+o(1)
\end{align*}
asymptotically almost surely, using \eqref{eq:degreechernoff} and \eqref{eq:fronttermasympt}.
Therefore, putting things together, from \eqref{eq:avgtargetdecomp} we see that
$$
H_w=\frac{N}{M}\frac{\sum\limits_{m=1}^M\gamma_m}{\gamma_{B(w)}}(1+o(1))(1+o(1))
$$
asymptotically almost surely, as proposed.

\section{A central limit theorem for $H_w$}

In order to obtain a central limit theorem for $H_w$, the same decomposition as previously seen in 
\eqref{eq:avgtargetdecomp} will become useful. Indeed, we will see that the scaling as well as the centering of the central limit 
theorem solely depend on the factor $\frac{2|E|}{d_w}$ again (see also the statement of Theorem \ref{thm:clttargethittingtimediffp}). This 
means that the statement in \eqref{eq:fronttermasympt} has to be enhanced to a convergence in distribution. Similarly, negligibility 
of the spectral term has to be made more precise as well to account for the scaling of the central limit theorem.

Consider the following definitions of the expected number and variance of edges within and between blocks, $\mu_{\mathrm{in}}$ and $\mu_{\mathrm{out}}$:
with
\begin{align*}
	\mu_m&\coloneqq \binom{N/M+1}{2}p_m, \quad m=1,\dots,M\\
	\mu_{\mathrm{in}}&\coloneqq \sum\limits_{m=1}^M\mu_m,\quad\\
	\mu_{\mathrm{out}}&\coloneqq\bigg[\binom{N+1}{2}-M\binom{N/M+1}{2}\bigg]q=\binom{M}{2}\left(\frac{N}{M}\right)^2q\\
	\tau_m^2&\coloneqq \binom{N/M+1}{2}p_m(1-p_m), \quad m=1,\dots,M\\
	\tau_{\mathrm{in}}^2&\coloneqq \sum\limits_{m=1}^M\tau_m^2,\quad\\
	\tau_{\mathrm{out}}^2&\coloneqq\bigg[\binom{N+1}{2}-M\binom{N/M+1}{2}\bigg]q(1-q)=\binom{M}{2}\left(\frac{N}{M}\right)^2q(1-q)\\
	\tau^2&\coloneqq\tau_{\mathrm{in}}^2+\tau_{\mathrm{out}}^2.
\end{align*}
Notice that $\E[|E|]=\mu_{\mathrm{in}}+\mu_{\mathrm{out}}$.
and hence, due to 
$2|E|=\sum\limits_{v\in V}d_v+|L|$,
\begin{equation} 2(\mu_{\mathrm{in}}+\mu_{\mathrm{out}})=\sum\limits_{m=1}^M\frac NM\gamma_m+\sum\limits_{m=1}^M\frac NM p_m=\frac NM\sum\limits_{m=1}^M(\gamma_m+p_m).
	\label{eq:edgesdegreesexpectation}
\end{equation}


The proof of Theorem \ref{thm:clttargethittingtimediffp} can be split into several parts. More precisely, we break down 
the spectral decomposition of $H_w$ into its factors.

Then, the following proposition holds true: 
\begin{prop}\label{prop:clt_domterm_diffp}
	Assume that the conditions of Theorem \ref{thm:clttargethittingtimediffp} hold, then
	\[	\frac{\gamma_m}{N\upsilon_m}\cdot\frac{\gamma_m}{\bar \gamma}\cdot\left(2\frac{|E|-\frac{|L|}{2}}{d_w}-\frac{N\bar\gamma}{\gamma_m}\right)\dconv \mathcal{N}(0,1).\]
\end{prop}
The proof of Proposition \ref{prop:clt_domterm_diffp} will be given with the help of several lemmas.

\begin{lem}\label{lem:diffpEconv}
	Under the conditions of Theorem \ref{thm:clttargethittingtimediffp} the following central limit theorem holds: 
	\begin{equation*}
		\frac{1}{{\tau}}\Big(|E|-(\mu_{\mathrm{in}}+\mu_{\mathrm{out}})\Big)\dconv \mathcal{N}(0,1).
	\end{equation*}
\end{lem}
Notice that condition \eqref{eq:spectralcondition_diffp} will not be necessary for the proof of this lemma.
\begin{proof}
	For two vertices $v,w\in V$ we denote by $\eps_{v,w}$ the random variable valued 1 if there is an edge between $v$ and $w$ and 0 otherwise. Clearly, $\eps_{v,w}$ is Bernoulli-distributed with parameter $q$ is $v$ and $w$ are in different blocks and with parameter $p_m$ if they are both contained in the same block $V^{(m)}$, $m=1,\dots,m$.

	Then 
	\[|E|=\sum\limits_{v,w\in V_N}\eps_{v,w}\]
	with
	\[\E |E| = \mu_{\mathrm{in}}+\mu_{\mathrm{out}}\quad\text{and}\quad \V{ |E|}=\tau_{\mathrm{in}}^2+\tau_{\mathrm{out}}^2=\tau^2\]
	For $v,w\in V_N$ let $\alpha_{v,w}=\eps_{v,w}-\E \eps_{v,w}$. Then $\sum\limits_{v,w\in V}\alpha_{v,w}$ can be interpreted as a scheme of independent centered random variables with existing variances  and $\sum\limits_{v,w\in V}\V{\alpha_{v,w}}=\tau^2<\infty$.
	
	Then notice that $\tau^2\to\infty$ as $N\to\infty$ due to \eqref{eq:connectivity}. Therefore, 
	\[\1_{\{|\alpha_{v,w}|>\eps\tau\}}= 0\] for sufficiently large $N$ and therefore,
	\[\EW{\alpha_{v,w}^2\1_{\{|\alpha_{v,w}|>\eps\tau\}}}=0\] for sufficiently large $N$, thereby yielding that Lindeberg's condition for the scheme of random variables $(\alpha_{v,w})_{v,w\in V}$ 
	\[\lim\limits_{N\to\infty}\frac{1}{\sum\limits_{v,w\in V}\V{\alpha_{v,w}}}\sum\limits_{v,w\in V}\EW{\alpha_{v,w}^2\1_{\{|\alpha_{v,w}|>\eps\tau\}}}=0\]
	is satisfied. Therefore,
	\[\frac{1}{\tau}(|E|-(\mu_{\mathrm{in}}+\mu_{\mathrm{out}}))=\frac{1}{\sum\limits_{v,w\in V}\V{\alpha_{v,w}}}\sum\limits_{v,w\in V}\alpha_{v,w}\dconv \mathcal N(0,1).\qedhere\]
\end{proof}

We are now ready to prove Proposition \ref{prop:clt_domterm_diffp}: 
\begin{proof}[Proof of Proposition \ref{prop:clt_domterm_diffp}]
	Assume that $w\in V^{(m)}$	Note that the degree $d_w$ can be written as $$d_w= \sum_{v\in V} \eps_{v,w}$$ which has expectation $\gamma_m$ and variance $\upsilon_m^2\coloneqq\frac{N}{M}p_m(1-p_m)+(M-1)\frac NMq(1-q)$
	Thus,	by the classical Lindeberg-Feller central limit theorem,
	\begin{equation}
		\frac{1}{\upsilon_m}(d_w-\gamma_m)\dconv \mathcal{N}(0,1)
		\label{eq:cltdw}
	\end{equation}
	
	and furthermore for the number of loops in the graph $|L|=\sum\limits_{v\in V}a_{v,v}$,
	\begin{align}\label{eq:loopclt_diffp}
		\frac{|L|-\sum\limits_{m=1}^M\frac{N}{M}p_m}{\sqrt{\sum\limits_{m=1}^M\frac{N}{M}p_m(1-p_m)}}\dconv\mathcal N(0,1)
	\end{align}
	
	We notice that 
	\begin{align}
		\hspace{0.5cm}&\hspace{-0.5cm}\frac{1}{\tau}\Big(|E|-\frac{|L|}{2}-\big(\mu_{\mathrm{in}}+\mu_{\mathrm{out}}-\frac12{\sum\limits_{m=1}^M\frac{N}{M}p_m}\big)\Big)
		=\frac{1}{\tau}\Big(|E|-\big(\mu_{\mathrm{in}}+\mu_{\mathrm{out}}\big)\Big)\notag\\
		&\qquad\qquad-\frac12
		\frac{\sqrt{\sum\limits_{m=1}^M\frac{N}{M}p_m(1-p_m)}}{\tau}\cdot\sqrt{\frac{1}{\sum\limits_{m=1}^M\frac{N}{M}p_m(1-p_m)}}\Big({|L|}-{\sum\limits_{m=1}^M\frac{N}{M}p_m}\Big)\label{eq:decompEminusL}
	\end{align}
	Here, $\tau=\sqrt{\tau_{\mathrm{in}}^2+\tau_{\mathrm{out}}^2}\geq \tau_{\mathrm{in}}=\sqrt{\sum\limits_{m=1}^M\binom{N/M+1}{2} p_m(1-p_m)}$ and thus 
	\[	\frac{\sqrt{\sum\limits_{m=1}^M\frac{N}{M}p_m(1-p_m)}}{\tau}\leq 	\frac{\sqrt{\sum\limits_{m=1}^M\frac{N}{M}p_m(1-p_m)}}{\sqrt{\sum\limits_{m=1}^M\binom{N/M+1}{2} p_m(1-p_m)}}=\sqrt{\frac{\frac NM}{\binom{N/M+1}{2} }}=\sqrt{\frac{2}{{N/M+1}}}\]
	converges to 0. Therefore, the expression in the second line of \eqref{eq:decompEminusL} converges to 0 in probability and by 
	Slutzky's theorem using Lemma \ref{lem:diffpEconv} and \eqref{eq:loopclt_diffp}, we find that there is a sequence of random variables $Z_N$, converging to a normal random variable $Z$  with mean 0 and variance $1$ such that
	\begin{equation}
		|E|-\frac{|L|}{2}=\mu_{\mathrm{in}}+\mu_{\mathrm{out}}-\frac12{\sum\limits_{m=1}^M\frac{N}{M}p_m}+\tau\cdot Z_N
		\label{eq:edgesconvergence}
	\end{equation}
	and from  \eqref{eq:cltdw} we analogous obtain that there is a sequence of random variables $Z_N'$ converging to a normal random variable $Z'$ with mean 0 and variance $1+\tilde\kappa$ such that 
	\begin{equation}
		d_w=\gamma_m+\upsilon_m\cdot Z_N'.
		\label{eq:degreeconvergence}
	\end{equation}
	
	Moreover,
	\begin{align}
		\frac{\gamma_m}{N\upsilon_m}&\cdot\frac{\gamma_m}{\bar \gamma}\cdot \frac{1}{2\gamma_m}\cdot\frac NM\sum\limits_{l=1}^Mp_l=\frac12\cdot\frac{\gamma_m}{\bar \gamma}\cdot\frac{\bar p}{\upsilon_m}\nconv 0\label{eq:clt_negl_term1}
	\end{align}
	due to \eqref{eq:cltadditionalassump} and	
	\begin{align}
		\frac{\gamma_m}{N\upsilon_m}&\cdot\frac{\gamma_m}{\bar \gamma}\cdot \frac{\tau}{\gamma_m}=\frac{\tau}{N\upsilon_m}\cdot\frac{\gamma_m}{\bar \gamma}\\
		&=\frac{1}{N\upsilon_m}\sqrt{\sum\limits_{l=1}^M\binom{N/M+1}{2}p_l(1-p_l)+\binom{M}{2}\left(\frac{N}{M}\right)^2q(1-q)}\cdot\frac{\gamma_m}{\bar \gamma}\notag\\
		&\approx\frac{\sqrt{\frac N2}}{N\upsilon_m}\cdot\frac{\gamma_m}{\bar \gamma}\sqrt{ \frac1M\sum\limits_{l=1}^M\big(\frac NMp_l(1-p_l)+(M-1)\frac NMq(1-q)\big)}\notag\\
		&=\sqrt{\frac{2}{N}}\cdot\frac{\bar\upsilon}{\upsilon_m}\cdot\frac{\gamma_m}{\bar \gamma}\nconv 0\label{eq:longtermnegl}
	\end{align}
	due to \eqref{eq:cltadditionalassump} and \eqref{eq:cltadditionalassump2}.
	Overall, we obtain from \eqref{eq:edgesconvergence} and \eqref{eq:degreeconvergence}, \eqref{eq:edgesdegreesexpectation}
	
	\begin{align}
		\frac{\gamma_m}{N\upsilon_m}&\cdot\frac{\gamma_m}{\bar \gamma}\cdot\left(2\frac{|E|-\frac{|L|}{2}}{d_w}-\frac{N\bar\gamma}{\gamma_m}\right)\notag\\
		&=\frac{\gamma_m}{N\upsilon_m}\cdot\frac{\gamma_m}{\bar \gamma}\cdot\left(2\frac{\mu_{\mathrm{in}}+\mu_{\mathrm{in}}-\frac12\frac NM\sum\limits_{l=1}^Mp_l+\tau\cdot Z_N}{\gamma_m+\upsilon_m \cdot Z_N'}-\frac{N\bar\gamma}{\gamma_m}\right)\notag\\
		&=\frac{\gamma_m}{N\upsilon_m}\cdot\frac{\gamma_m}{\bar \gamma}\cdot\left(2\frac{\frac12\frac NM\sum\limits_{l=1}^M(\gamma_l+p_l)-\frac12\frac NM\sum\limits_{l=1}^Mp_l+\tau\cdot Z_N}{\gamma_m+\upsilon_m \cdot Z_N'}-\frac{N\bar\gamma}{\gamma_m}\right)\notag\\
		&=\frac{\gamma_m}{N\upsilon_m}\cdot\frac{\gamma_m}{\bar \gamma}\cdot\left(\frac{\frac NM\sum\limits_{l=1}^M\gamma_l+2\tau \cdot Z_N}{\gamma_m+\upsilon_m \cdot Z_N'}-\frac{N\bar\gamma}{\gamma_m}\right)\notag\\
		&=\frac{\gamma_m}{N\upsilon_m}\cdot\frac{\gamma_m}{\bar \gamma}\cdot\left(\frac{\frac{N\bar\gamma}{\gamma_m}+2\frac{\tau}{\gamma_m} \cdot Z_N}{1+\frac{\upsilon_m}{\gamma_m} \cdot Z_N'}-\frac{N\bar\gamma}{\gamma_m}\right)\notag\\
		&=\frac{\gamma_m}{N\upsilon_m}\cdot\frac{\gamma_m}{\bar \gamma}\cdot\frac{\frac{N\bar\gamma}{\gamma_m}+2\frac{\tau}{\gamma_m} \cdot Z_N-\frac{N\bar\gamma}{\gamma_m}-\frac{N\bar\gamma}{\gamma_m}\frac{\upsilon_m}{\gamma_m}Z_N'}{1+\frac{\upsilon_m}{\gamma_m} \cdot Z_N'}\notag\\
		&=\frac{\gamma_m}{N\upsilon_m}\cdot\frac{\gamma_m}{\bar \gamma}\cdot\frac{2\frac{\tau}{\gamma_m} \cdot Z_N-\frac{N\bar\gamma}{\gamma_m}\frac{\upsilon_m}{\gamma_m}Z_N'}{1+\frac{\upsilon_m}{\gamma_m} \cdot Z_N'}\notag\\
		&=\frac{2\frac{\gamma_m}{N\upsilon_m}\cdot\frac{\gamma_m}{\bar \gamma}\cdot\frac{\tau}{\gamma_m} \cdot Z_N}{1+\frac{\upsilon_m}{\gamma_m} \cdot Z_N'}-\frac{\frac{\gamma_m}{N\upsilon_m}\cdot\frac{\gamma_m}{\bar \gamma}\cdot\frac{N\bar\gamma}{\gamma_m}\frac{\upsilon_m}{\gamma_m}Z_N'}{1+\frac{\upsilon_m}{\gamma_m} \cdot Z_N'}\notag\\
		&=\frac{2\frac{\gamma_m}{N\upsilon_m}\cdot\frac{\gamma_m}{\bar \gamma}\cdot\frac{\tau}{\gamma_m} \cdot Z_N}{1+\frac{\upsilon_m}{\gamma_m} \cdot Z_N'}-\frac{Z_N'}{1+\frac{\upsilon_m}{\gamma_m} \cdot Z_N'}\notag
	\end{align}
	In both terms the denominator converges to 1 in probability as $\upsilon\leq \sqrt{\gamma_m}$ follows from $1-p, 1-q\leq 1$ and thus $\frac{\upsilon_m}{\gamma_m}\leq\frac{1}{\sqrt{\gamma_m}}\to0$ as $N\to\infty$.
	
	From \eqref{eq:longtermnegl} we notice that the coefficient of $Z_N$ in the numerator of the first term converges to 0, therefore the entire first term converges in probability to 0 according to Slutzky's theorem.
	
	The second term converges to a standard normal distributed random variable according to Slutzky's theorem. This concludes the proof.
\end{proof}

A final ingredient is needed to complete the proof of the central limit theorem:

\begin{prop}\label{prop:secondpartnegligible_diffp}
	Under the assumptions of Theorem \ref{thm:clttargethittingtimediffp}
	\begin{equation*}
		\frac{\gamma_m}{\upsilon_m}\left(\sum\limits_{k=2}^N\frac{1}{1-\lambda_k}u_{k,w}^2-1\right)\Pconv 0.
	\end{equation*}
\end{prop}
\begin{proof}
	To prove the proposition, let us rewrite the term in the brackets as above.
	\begin{align*}Z_N\coloneqq & \sum\limits_{k=2}^{N}\frac{1}{1-\lambda_k}u_{k,w}^2=1+B_{w,w}-2\pi_w+\sum\limits_{k=2}^N\frac{\lambda_k^2}{1-\lambda_k}u_{k,w}^2.
	\end{align*}
	
	As $\pi_v=\frac{d_v}{2|E|}$ for all $v\in V$ and the $d_v$ are  identically distributed
	it is obvious that the $\pi_v$ are identically distributed for $v\in V$. 
	Furthermore,
	$\sum_{v\in V}d_v=2|E|$.
	Thus we obtain
	\[N\EW{\pi_w}=\sum\limits_{v\in V}\EW{\pi_v}=\frac{1}{2|E|}\E\Bigl[\sum\limits_{v\in V}d_v\Bigr]=1.\]
	Therefore,
	\[\E\Bigl[\frac{\gamma_m}{\upsilon_m}\pi_w\Bigr]=\frac{\gamma_m}{N\upsilon_m}\leq\frac{1}{\upsilon_m}\nconv 0\]
	and by non-negativity of $\frac{\gamma_m}{\upsilon_m}\pi_w$ and Markov's inequality,
	\begin{equation*}
		\frac{\gamma_m}{\upsilon_m}\pi_w\Pconv 0.
	\end{equation*}
	and similarly using \eqref{eq:degreechernoff} 
	\[
	\frac{\gamma_m}{\upsilon_m}B_{w,w}=\frac{\gamma_m}{\upsilon_m}\cdot\frac{a_{w,w}}{d_w}\Pconv 0.
	\]
	Finally, we have from  Lemma \ref{lem:spectraltermnegl} that
	\[\sum\limits_{k=2}^N\frac{\lambda_k^2}{1-\lambda_k}u_{k,w}^2\leq C\frac{p_{\min}^2}{\gamma_{\min}(M-1)^2q^2}\]
	Then due to \eqref{eq:spectralcondition_diffp}
	\[\frac{\gamma_m}{\upsilon_m}\sum\limits_{k=2}^N\frac{\lambda_k^2}{1-\lambda_k}u_{k,w}^2\nconv 0.\]
	So altogether, we obtain				 
	\[		 
	\frac{\gamma_m}{\upsilon_m}(Z_N-1)\Pconv 0.\qedhere\]
\end{proof}

\begin{proof}[Proof of Theorem \ref{thm:clttargethittingtimediffp}]
	We only give the proof in the case $\tilde\kappa\in(0,\infty)$, the other cases follow analogously.
	Notice that from Proposition \ref{prop:clt_domterm_diffp} and \ref{prop:secondpartnegligible_diffp} there are sequences of random variables $X_N$ converging to a standard normal random variable in distribution and $X_N'$ converging in probability to 0 such that
	\begin{align*}
		\frac{2|E|-|L|}{d_w}&=\frac{N\bar\gamma}{\gamma_m}+\frac{N\upsilon_m}{\gamma_m}\cdot\frac{\bar \gamma}{\gamma_m}\cdot X_N\quad\text{and}\\
		\sum\limits_{k=2}^N\frac{1}{1-\lambda_k}u_{k,w}^2&=1+\frac{\upsilon_m}{\gamma_m}\cdot X_N'
	\end{align*}
	Recalling the decomposition \eqref{eq:avgtargetdecomp},
	\begin{align*}
		\frac{\gamma_m}{N\upsilon_m}\cdot\frac{\gamma_m}{\bar \gamma}\cdot\left(H_w-\frac{N\bar\gamma}{\gamma_m}\right)\hspace{-4cm}&\hspace{4cm}=\frac{\gamma_m}{N\upsilon_m}\cdot\frac{\gamma_m}{\bar \gamma}\cdot\left(	\frac{2|E|-|L|}{d_w}\sum\limits_{k=2}^N\frac{1}{1-\lambda_k}u_{k,w}^2-\frac{N\bar\gamma}{\gamma_m}\right)\\
		&=\frac{\gamma_m}{N\upsilon_m}\cdot\frac{\gamma_m}{\bar \gamma}\cdot\left[\left(\frac{N\bar\gamma}{\gamma_m}+\frac{N\upsilon_m}{\gamma_m}\cdot\frac{\bar \gamma}{\gamma_m}\cdot X_N\right)\cdot\left(1+\frac{\upsilon_m}{\gamma_m} \cdot X_N'\right)-\frac{N\bar\gamma}{\gamma_m}\right]\\
		&=\frac{\gamma_m}{N\upsilon_m}\cdot\frac{\gamma_m}{\bar \gamma}\cdot\left[\frac{N\upsilon_m}{\gamma_m}\cdot\frac{\bar \gamma}{\gamma_m}\cdot X_N\cdot\left(1+\frac{\upsilon_m}{\gamma_m}\cdot X_N'\right)+\frac{N\upsilon_m}{\gamma_m}\cdot\frac{\bar \gamma}{\gamma_m} \cdot X_N'\right]\\
		&=X_N\cdot\left(1+\frac{\upsilon_m}{\gamma_m} \cdot X_N'\right)+ X_N'.
		\label{eq:CLTcomplete}
	\end{align*}
	Using the convergence in distribution of $X_N$ and the convergence in probability of $X_N'$, we immediately obtain convergence in distribution to a standard normal random variable.
\end{proof}

\section{The case of identical $p_m$}\label{sec:identicalpi}

If $p_1=\dots=p_M\eqqcolon p$ are identical, we can significantly improve on the results for general $p_m$. In this case, we are able to explicitly compute the eigenvalues of $P_M$ (and therefore $P_M'$): Let $\gamma = \frac{N}{M}(p+(M-1)q)$. 

In place of the previously established conditions on the $p_m$ and $q$, we now require

	\begin{equation}
		\frac{M \log^4(N)}{N p(N)+N(M-1)q(N)}\nconv 0. 
		\label{eq:connectivitypq}
	\end{equation}
	and 
	\begin{equation}
		\label{eq:qcondpq}
		q(N) \gg\sqrt{\frac{p(N)\log N}{NM}}.
	\end{equation}
	Finally, set	
	\begin{equation}
		\kappa\coloneqq \lim\limits_{N\to\infty}{\frac{(M-1)q}{p}}\in[0,\infty]\label{eq:tildekappa}
	\end{equation}
	
	As results we obtain
		\begin{thm}\label{maintheopq}
		Assume that conditions \eqref{eq:connectivitypq} and \eqref{eq:qcondpq} hold and that $\kappa$ is well defined. Then
		\[H^v = N(1+o(1))\]
		asymptotically almost surely.
	\end{thm}
	\begin{thm}\label{maintheo2pq}
		Assume that conditions \eqref{eq:connectivitypq} and \eqref{eq:qcondpq} hold and that $\kappa$ is well defined. Then
		\begin{align*}
			H_w=N(1+o(1))
		\end{align*}
		asymptotically almost surely.
	\end{thm}
	
	\begin{proof}[Proof of Theorems \ref{maintheopq} and \ref{maintheo2pq}]
	The proof is nearly identical to that for different $p_i$ with a few modifications

\begin{lem} In the case of $p_1=\dots=p_M=p$, the eigenvalues $\lambda_m(P_M), m=1,\dots,M$ of the matrix $P_M$ are given by $\lambda_1(P_M)=p+(M-1)q$ and $\lambda_m(P_M)=p-q$, $m\geq 2$. \label{lem:transitionprobeigenvalues}
	\end{lem}
	\begin{proof}
		This can be checked through direct computation.
	\end{proof}

After this, we can follow most of the proof from the previous sections. However, in place of Lemma \ref{lem:Reigenvalues}, we obtain

\begin{lem}\label{lem:Reigenvalues2}
		For $R$ as above, we have under the condition $\gamma\gg\log N$
		\[\|R\|_\infty\leq \sqrt{\frac{\log N}{\gamma}}\cdot(1+o(1))\]
		asymptotically almost surely.  
	\end{lem}
	\begin{proof} The proof is similar to that of Lemma 3.3 in \cite{LoeweTorres} but given here for the sake of completeness.
		Clearly,
		the entries of $R$ are given by $r_{v,w}=\frac{\gamma-\sqrt{d_vd_w}}{\gamma\sqrt{d_vd_w}}a_{v,w}$
		By setting $c={\sqrt{\log N}}$ in \cref{eq:degreechernoff}, 
		$$|d_v-\gamma|\leq  \sqrt{\log N\gamma} \quad \mbox{ for all $v=1,\dots,N$} 
		$$ 
		with probability converging to 1. Hence,
		\[|\gamma-\sqrt{d_vd_w}|\leq  \sqrt{\log N \gamma}\]
		with probability tending to 1.
		Furthermore,
		$$d_vd_w>(\gamma-\sqrt{\log N\gamma})^2$$ with probability converging to 1. Thus,
		\begin{align*}
			\|R\|_\infty=\max\limits_{v\in V}\sum\limits_{w\in V}|r_{v,w}|&=\max\limits_{v\in V}\sum\limits_{w\in V}\left|\frac{\gamma-
				\sqrt{d_vd_w}}{\gamma\sqrt{d_vd_w}}a_{v,w}\right|\\
			&\leq\frac{\sqrt{\log N \gamma}}{\gamma(\gamma-\sqrt{\log N\gamma})}\max\limits_{v\in V}\sum\limits_{w=1}^Na_{v,w}\\
			&= \frac{\sqrt{\log N \gamma}}{\gamma(\gamma-\sqrt{\log N\gamma})}\max\limits_{v\in V}d_v\\
			&\leq\frac{\sqrt{\log N \gamma}(\gamma+\sqrt{\log N\gamma})}{\gamma(\gamma-\sqrt{\log N\gamma})}\\
			&=\sqrt{\frac{\log N}{\gamma}}\cdot\frac{(\gamma+\sqrt{\log N\gamma})}{(\gamma-\sqrt{\log N\gamma})}\\
			&=\sqrt{\frac{\log N}{\gamma}}\cdot\frac{1+\sqrt{\frac{\log N}{\gamma}}}{1-\sqrt{\frac{\log N}{\gamma}}}\\
			&=\sqrt{\frac{\log N}{\gamma}}\cdot(1+o(1))
		\end{align*}
		since by assumption $\gamma \gg \log(N)$. 
	\end{proof}

We thus obtain from the explicit representation of the eigenvalues of $P_M$ 
\begin{align}
	\lambda_k(B)\leq 1-\frac{Nq}{\gamma}(1+o(1))\label{eq:eigenvalueexplicitsamep}
\end{align}
The remainder of the proof of the two theorems is then analogous to that for different $p_i$ but applying the previously improved results.
\end{proof}

The case of identical $p_i$ also allows a modified, improved version of the central limit theorem as given in Theorem \ref{thm:clttargethittingtimediffp}. The proof is nearly identical, with similar modifications as for the laws of large numbers.

This particular case furthermore allows to consider a central limit theorem for the hitting time averaged over the starting vertex. In particular, we can make the centering and scaling more precise and improve the necessary conditions as follows:

	\begin{thm}
		\label{thm:clttargethittingtime}
		Assume that conditions \eqref{eq:connectivitypq} -- \eqref{eq:tildekappa} are replaced by
			\begin{equation}
			\frac{M \log^4(N)}{N p(N) (1-p(N))+N(M-1)q(N)(1-q(N))}\nconv 0. 
			\label{eq:connectivitypq2}
		\end{equation}
		and 
		\begin{equation}
			\label{eq:qcondpq2}
			q(N) \gg\sqrt{\frac{p(N)\log N}{NM(1-p(N))}}.
		\end{equation}		
		and assume that	
		\begin{equation}
			\tilde\kappa\coloneqq \lim\limits_{N\to\infty}{\frac{(M-1)q(1-q)}{p(1-p)}}\in[0,\infty]\label{eq:tildekappa2}
		\end{equation} is well defined. Additionally, assume that the limit $\zeta\coloneqq \lim\limits_{N\to\infty}\frac{1-p}{1-q}$ is well defined and either a positive constant or (if $\tilde\kappa\in\{0,\infty\}$) equal to $\tilde\kappa$.
		Then
		\begin{align*}
			\rho_N\left(H_w-N\right)&\dconv \mathcal{N}(0,1-\alpha),
		\end{align*} 	
		where $\dconv $ denotes convergence in distribution and
		\[\rho_N\coloneqq\begin{cases}
			\sqrt{\frac{p}{NM(1-p)}},&\text{if $\tilde\kappa<\infty$}\\
			\sqrt{\frac{(M-1)q}{NM(1-q)}},&\text{if $\tilde\kappa=\infty$}
		\end{cases}\qquad\text{and}\qquad\alpha\coloneqq\begin{cases}
			\frac{\tilde\kappa\left(2\zeta-1+\tilde\kappa\zeta^2\right)}{(1+\zeta\tilde\kappa)^2},&\text{if $\tilde\kappa\in(0,\infty)$}\\
			0,&\text{if $\tilde\kappa\in\{0,\infty\}$}
		\end{cases}\]
	\end{thm}

Indeed, in the case when $p$ and $q$ satisfy the conditions of Theorem \ref{thm:clttargethittingtimediffp}, the result stated here is implied, which can be verified explicitly computing the scaling term in  Theorem \ref{thm:clttargethittingtimediffp} and comparing with the scaling terms given here for different values of $\tilde\kappa$ (as well as $\alpha$ in the case $\tilde\kappa\in(0,\infty)$). 
	
	The convergence in distribution of the respective dominating terms can be proven analogously to the proof of \ref{prop:clt_domterm_diffp}.
	The negligibility of the spectral term (Proposition \ref{prop:secondpartnegligible_diffp}) can be rewritten in the following way: 
	\begin{prop}\label{prop:secondpartnegligible}
		Under the assumptions of Theorem \ref{thm:clttargethittingtime}, for $\tilde\kappa<\infty$,
		\begin{equation*}\sqrt{\frac{Np}{M(1-p)}}\left(\sum\limits_{k=2}^N\frac{1}{1-\lambda_k}u_{k,w}^2-1\right)\Pconv 0.
		\end{equation*}
		Furthermore, for $\tilde\kappa=\infty$
		\begin{equation*}
			\sqrt{\frac{N(M-1)q}{M(1-q)}}\left(\sum\limits_{k=2}^N\frac{1}{1-\lambda_k}u_{k,w}^2-1\right)\Pconv 0.
		\end{equation*}
	\end{prop}
	\begin{proof}
		The proof follows analogously to the proof of Proposition \ref{prop:secondpartnegligible_diffp}. In the cases $\tilde\kappa>0$, this together with the definitions of $\tilde\kappa$ and $\zeta$ is sufficient.
		
		For $\kappa=0$, we notice that from \eqref{eq:eigenvalueexplicitsamep} together with similar computations to above,
		\[\sum\limits_{k=2}^N\frac{\lambda_k^2}{1-\lambda_k}u_{k,w}^2\leq \frac{C}{Nq}\] for a constant $C>0$ and additionally by \eqref{eq:qcondpq2},
		
		\[\sqrt{\frac{Np}{M(1-p)}}\frac{C}{Nq}\leq\frac{C\sqrt{p}}{\sqrt{NM(1-p)q^2}}\nconv 0.\qedhere\]
	\end{proof}
	
	The remainder of the proof of Theorem \ref{thm:clttargethittingtime} follows then analogously to that of Theorem \ref{thm:clttargethittingtimediffp}.

\bibliography{bibl}
\bibliographystyle{alpha}
\end{document}